\numberwithin{equation}{section}
\theoremstyle{plain}
\newtheorem{theorem}{Theorem}[section]
\newtheorem{lemma}[theorem]{Lemma}
\newtheorem{proposition}[theorem]{Proposition}
\newtheorem{hypothesis}[theorem]{Hypothesis}
\newtheorem{definitionlemma}[theorem]{Definition-Lemma}
\theoremstyle{definition}
\newtheorem{definition}[theorem]{Definition}
\newtheorem{remark}[theorem]{Remark}
\newtheorem{question}[theorem]{Question}
\let\c@equation\c@theorem  % incorporate equation numbering
\DeclareMathOperator{\hdet}{hdet}
\DeclareMathOperator{\hcodet}{hcodet}
\DeclareMathOperator{\gldim}{gldim}
\DeclareMathOperator{\uend}{\underline{end}}
\DeclareMathOperator{\Ext}{Ext}
\DeclareMathOperator{\GKdim}{GKdim}
\DeclareMathOperator{\eEnd}{end}
\DeclareMathOperator{\D}{{\sf D}}
\DeclareMathOperator{\Hom}{Hom}
\DeclareMathOperator{\GrMod}{{\sf GrMod}}
\DeclareMathOperator{\Per}{{\sf Per}}
\newcommand{\fe}{\mathfrak{e}}
\newcommand{\mc}{\mathcal}
\newcommand{\ch}{\operatorname{char}}
\begin{document}

\title{Hopf actions and Nakayama automorphisms}

\author{Kenneth Chan, Chelsea Walton, and James Zhang}

\address{Chan: Department of Mathematics, Box 354350,
University of Washington, Seattle, Washington 98195,
USA}

\email{kenhchan@math.washington.edu}

\address{Walton: Department of Mathematics, Massachusetts
Institute of Technology, Cambridge, Massachusetts 02139,
USA}

\email{notlaw@math.mit.edu}

\address{zhang: Department of Mathematics, Box 354350,
University of Washington, Seattle, Washington 98195,
USA}

\email{zhang@math.washington.edu}

\subjclass[2010]{16E65, 16T05, 16T15, 16W50, 81R50}

\keywords{Artin-Schelter regular algebra, Hopf algebra action, Nakayama automorphism}

\begin{abstract}
Let $H$ be a Hopf algebra with antipode $S$, and let $A$ be an
$N$-Koszul Artin-Schelter regular algebra. We study connections
between the Nakayama automorphism of $A$ and $S^2$ of $H$ when $H$
coacts on $A$ inner-faithfully. Several applications pertaining to
Hopf actions on Artin-Schelter regular algebras are given.
\end{abstract}

\maketitle

\bibliographystyle{alpha}

\setcounter{section}{-1}
\section{Introduction}
\label{sec0}

This article is a study in noncommutative invariant theory,
particularly on the actions of finite dimensional Hopf algebras on
Artin-Schelter (AS) regular algebras.  Our results also lay the groundwork for other studies on Hopf actions on (filtered) AS regular algebras, namely for both \cite{CKWZ} and \cite{CWWZ:filtered}. To begin, we discuss the vital role of Nakayama automorphisms.

Let $k$ be a base field and let $B$ be either a connected graded
AS regular algebra or a noetherian AS regular Hopf algebra.  An
algebra automorphism $\mu_B$ of $B$ is called a
{\it Nakayama automorphism} of $B$ if there is an integer
$d\geq 0$ such that
\begin{equation}
\label{E0.0.1}\tag{E0.0.1} \Ext^i_{B^e}(B,B^{e})\cong \begin{cases}
{^{\mu_B} B^1}& {\text{if}} \quad i=d,
\\ 0& {\text{if}} \quad i\neq d\end{cases}
\end{equation}
as $B$-bimodules, where  $B^e=B\otimes B^{op}$
\cite[Definition 4.4(b)]{BrownZhang:Dualizing}. The algebra $B$ is
called {\it Calabi-Yau} if $\mu_B=Id$. Also, the quantity $d$ is the
global dimension of $B$ when $B$ is as given above. The definition of
$\mu_B$ is motivated by the classical notion of the  Nakayama
automorphism of a Frobenius algebra; see  Section
~\ref{secxx1} for details. The Nakayama automorphism $\mu_B$ is also unique up to inner
automorphism of $B$. Further, if $B$ is connected graded, then the Nakayama
automorphism can be chosen to be a graded algebra automorphism, and
in this case, it is unique since $B$ has no non-trivial graded inner
automorphism.

Fairly recently, Brown and third-named author proved that the
Nakayama automorphism of a noetherian AS regular Hopf
algebra $K$  can be written as follows:
\begin{equation}
\label{E0.0.2}\tag{E0.0.2}\mu_K=S^2\circ \Xi^l_{\int^l}.
\end{equation}
Here, $S$ is the antipode of $K$ and $\Xi^l_{\int^l}$ is the left winding automorphism of $K$ associated to the left homological integral $\int^l$ of $K$
\cite[Theorem 0.3]{BrownZhang:Dualizing}. This illustrates
how one can express homological invariants (e.g., the Nakayama
automorphism) in terms of other invariants (e.g., $S^2$ and
$\int^l$) of such an AS regular Hopf algebra $K$.

Now in this paper, we consider (co)actions of a Hopf algebra $K$ on connected graded AS regular algebras $A$ and we prove a result in the same vein of (\ref{E0.0.2}); c.f. Theorem \ref{thmxx0.1}. This establishes a connection between properties (as
well as numerical invariants) of such $A$ and $K$. One of the key
ideas is to use Manin's construction of quantum linear groups
\cite{Manin:QGNCG}\cite{Manin:Topics} to express $S^2$ of $K$ in terms
of the Nakayama automorphism of $A$. As a result, for large class of 
AS regular algebras $A$, we describe the structure of the Hopf 
algebras $H=K^*$ that act on $A$; see Theorems \ref{thmxx0.4}, 
\ref{thmxx0.6} and Proposition \ref{proxx0.7} below.

Before we state our main result, we introduce inner-faithful Hopf
actions. Let $N$ be a right $K$-comodule via the comodule structure
map $\rho: N \rightarrow N \otimes K$. In other words, $K$ coacts on
$N$. We say that this coaction is {\it inner-faithful} if for any
proper Hopf subalgebra $K'\subsetneq K$, we have that
$\rho(N)\not\subset N\otimes K'$. A left $K$-module $M$ is called
{\it inner-faithful} if there is no nonzero Hopf ideal $I\subset K$
such that $IM=0$.

\begin{theorem}
\label{thmxx0.1} Let $A$ be a connected graded $N$-Koszul AS regular
algebra with Nakayama automorphism $\mu_A$. Here, $N \geq 2$. Let
$K$ be a Hopf algebra with bijective antipode $S$ coacting on $A$
from the right. Suppose that the homological codeterminant
(Definition \ref{defxx1.6}(b)) of the $K$-coaction on $A$ is the
element ${\sf D} \in K$ and that the $K$-coaction on $A$ is
inner-faithful. Then
\begin{equation}
\label{E0.1.1}\tag{E0.1.1} \eta_{\D} \circ S^2=\eta_{\mu_A^\tau},
\end{equation}
where $\eta_{\sf D}$ is the automorphism of $K$ defined by
conjugating ${\sf D}$ and $\eta_{\mu_A^\tau}$ is the automorphism
of $K$ given by conjugating by  the transpose of the corresponding
matrix of $\mu_A$.
\end{theorem}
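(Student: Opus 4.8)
The plan is to reduce the stated identity of automorphisms of $K$ to a single matrix identity on the generators produced by the coaction, and then to extract that identity from the homological self-duality of $A$.

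First I would fix a basis $x_1,\dots,x_n$ of $V=A_1$ and write the restriction of the coaction to $V$ as $\rho(x_j)=\sum_i x_i\otimes a_{ij}$, giving a matrix $\mathbf a=(a_{ij})$ over $K$. Coassociativity and counitality of $\rho$ in degree one say exactly that $\mathbf a$ is a comatrix, namely $\Delta(a_{ij})=\sum_k a_{ik}\otimes a_{kj}$ and $\varepsilon(a_{ij})=\delta_{ij}$, so the antipode axioms give $S(\mathbf a)\,\mathbf a=\mathbf a\,S(\mathbf a)=I$; hence $\mathbf a$ is invertible with $\mathbf a^{-1}=S(\mathbf a)$, and applying $S$ once more records the relations linking $S^2(\mathbf a)$ to $\mathbf a$ and $S(\mathbf a)$. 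Since the coaction is inner-faithful, the Hopf subalgebra generated by the $a_{ij}$ is all of $K$, so the entries $a_{ij}$ together with $S(a_{ij})=(\mathbf a^{-1})_{ij}$ generate $K$ as an algebra. Both $\eta_{\sf D}\circ S^2$ and $\eta_{\mu_A^\tau}$ are algebra endomorphisms of $K$, so it suffices to prove the matrix identity
\[
{\sf D}\,S^2(\mathbf a)\,{\sf D}^{-1}=M^\tau\,\mathbf a\,(M^\tau)^{-1},
\]
where $M$ is the matrix of $\mu_A$ on $V$ and the left side is the matrix with $(i,j)$ entry ${\sf D}\,S^2(a_{ij})\,{\sf D}^{-1}$; the corresponding identity for $S(\mathbf a)=\mathbf a^{-1}$ then follows by inverting, covering all the algebra generators of $K$.

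The input from $N$-Koszulity and AS-regularity enters as follows. The coaction extends functorially to the minimal free resolution of the trivial module $k$, whose terms are $A\otimes W_i$ with $W_1=V$ and top term $W_d\cong k$ one-dimensional; by Definition \ref{defxx1.6}(b) the coaction on this top line is multiplication by the homological codeterminant ${\sf D}$. Concretely I would use the realization of $A$ as the derivation-quotient algebra of a $\mu_A$-twisted superpotential $\omega=\sum\omega_{i_1\cdots i_\ell}\,x_{i_1}\otimes\cdots\otimes x_{i_\ell}\in V^{\otimes\ell}$. Preservation of $\omega$ up to the codeterminant reads
\[
\sum_{i_1,\dots,i_\ell}\omega_{i_1\cdots i_\ell}\,a_{p_1 i_1}\cdots a_{p_\ell i_\ell}=\omega_{p_1\cdots p_\ell}\,{\sf D},
\]
while the defining twisted symmetry of $\omega$ encodes $M$ through a cyclic rotation of the indices corrected by $\mu_A$. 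These two facts are the whole of the homological content.

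The core of the argument is then purely computational. Starting from the displayed relation for $\omega$, I would peel off the outer tensor factors one at a time by multiplying by entries of $S(\mathbf a)=\mathbf a^{-1}$ and summing, using $S(\mathbf a)\,\mathbf a=\mathbf a\,S(\mathbf a)=I$ to collapse the contractions, and apply $S$ once more to bring in $S^2(\mathbf a)$. Feeding in the twisted cyclic symmetry of $\omega$ introduces the matrix $M$, and after reindexing one should arrive at ${\sf D}\,S^2(\mathbf a)\,{\sf D}^{-1}=M^\tau\,\mathbf a\,(M^\tau)^{-1}$. I expect the main obstacle to be precisely this multilinear bookkeeping: because $S$ reverses products and the entries of $\mathbf a$ do not commute, one must track the order of the factors, the placement of ${\sf D}$, and the side on which $M$ acts with care. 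This is exactly what forces the transpose $\mu_A^\tau$ rather than $\mu_A$, and it is where the hypothesis that $S$ is bijective, so that $S^2$ is an automorphism and ${\sf D}$ is invertible, gets used.
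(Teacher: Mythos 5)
Your proposal is sound in outline but takes a genuinely different route from the paper, and the two places where you defer the work are exactly where the paper's machinery lives. The paper never touches the superpotential: it passes to the $\Ext$-algebra $E=\bigoplus_i\Ext^i_A(k,k)$, which is Frobenius because $A$ is AS regular, and transports the right $K$-coaction on $A$ to a left coaction on $E$ (Remark~\ref{remxx1.7}(d)). The whole of your ``peeling'' computation is then done in two moves inside $E$ (Lemma~\ref{lemxx3.1}): applying $\rho$ to $a_ib_j=\delta_{ij}\mathfrak e$ for dual bases of $E_1$ and $E_{l-1}$ gives $\mathbb Y\mathbb F^\tau=\mathsf D\mathbb I$, hence $S(\mathbb Y)=\mathbb F^\tau\mathsf D^{-1}$, and applying it to $b_ic_j=\delta_{ij}\mathfrak e$ gives the second peeling that produces $S^2(\mathbb Y)=\mathsf D\,\mathbb G\,\mathsf D^{-1}$ with $\mathbb G=\alpha\mathbb Y\alpha^{-1}$, $\alpha$ the matrix of the \emph{classical} Nakayama automorphism $\mu_E$ (Theorem~\ref{thmxx3.2}). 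The conversion from $\mu_E$ to $\mu_A$ is then outsourced to the Berger--Marconnet theorem (Lemma~\ref{lemxx4.1}), $\mu_E|_{E_1}=(-1)^{d+1}(\mu_A|_{A_1})^*$, with the sign dying under conjugation. Your superpotential route replaces the Frobenius pairing on $E$ by the preregular form $\omega$ and replaces Berger--Marconnet by the assertion that the twist of $\omega$ is $\mu_A$ up to sign and transpose; these are equivalent statements, so your second ``input'' is not cheaper than the paper's, it is the same theorem in different clothing and still needs a citation or a proof.

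The one step I would push back on as written is the mechanics of the peeling. From
$\sum_{i_1,\dots,i_\ell}\omega_{i_1\cdots i_\ell}a_{p_1i_1}\cdots a_{p_\ell i_\ell}=\omega_{p_1\cdots p_\ell}\mathsf D$
you cannot collapse the last factor just by multiplying by entries of $S(\mathbf a)$ and invoking $\mathbf a\,S(\mathbf a)=I$: the index $i_\ell$ is tied into $\omega$ as well as into $a_{p_\ell i_\ell}$, so the antipode relation alone does not contract it. You need the nondegeneracy of the pairing $V\otimes V^{\otimes(\ell-1)}\to k$ induced by $\omega$ to introduce the coaction matrix on the complementary syzygy space $W_{d-1}$ (your $\mathbb F$), and a second such pairing to get at $S^2$ --- which is precisely the dual-basis bookkeeping that the Frobenius structure of $E$ packages for you. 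So your plan works, but only after you re-import the Frobenius/preregularity data; once you do, you have essentially reconstructed Lemma~\ref{lemxx3.1}. If you want to keep the superpotential language, I would recommend citing the preregularity of $\omega$ explicitly and phrasing the two contractions as the statements $\mathbb Y\mathbb F^\tau=\mathsf D\mathbb I$ and $\mathbb F\mathbb G^\tau=\mathsf D\mathbb I$; the rest of your argument then goes through as the paper's does.
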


The automorphism on the left-hand side of equation \eqref{E0.1.1}
is the composition of the Hopf algebra automorphism $S^2$ of $K$ 
(which is bijective by hypothesis) and the Hopf algebra automorphism 
$\eta_D$ of $K$ where $\eta_{\sf D}$ is given by $\eta_{\sf D}(a)
={\sf D}^{-1}a{\sf D}$ for all $a\in K$. The right-hand side of 
equation \eqref{E0.1.1} needs some explanation. First we start
with a $k$-linear basis, say $\{x_1,\cdots,x_n\}$, of $A_1$,
the degree 1 graded piece of $A$. Then the Nakayama automorphism 
$\mu_A$ is $A$ can be written as 
$$\mu_A (x_j)=\sum_{i=1}^n m_{ij} x_j$$
for all $j=1,\cdots,n$. Let $M$ be the $n\times n$-matrix $(m_{ij})$
over the base field $k$. Since $K$ coacts on the $\Ext$-algebra
$E:=\Ext^*_A(k,k)$, we have a set of elements $\{y_{ij}\}_{n\times n}$ 
in $K$ such that $\rho(x_i^*)=\sum_{s=1}^n y_{is} \otimes x_s^*$.
Since the $K$-coaction on $A$ is inner-faithful, 
$\{y_{ij}\}_{n\times n}$ generates $K$ as a Hopf algebra. 
With this choice of $\{x_i\}_{i=1}^n$, we define 
$\eta_{\mu_A^\tau}: K\to K$ by
$$\eta_{\mu_A^\tau}: y_{ij}\mapsto \sum_{s,t=1}^n m_{si} y_{st}n_{jt}$$
for all $1\leq i, j\leq n$ where $(n_{ij})_{n\times n}=
(m_{ij})_{n\times n}^{-1}$. Roughly speaking, we use ``coordinates'' 
to define the ``conjugation'' automorphism $\eta_{\mu_A^\tau}$ of $K$. It is 
worth noting that  Theorem \ref{thmxx0.1} implies that the definition 
of this automorphism is independent of the choice of coordinates. 
We also conjecture that Theorem~\ref{thmxx0.1} should hold when $A$
is not necessarily $N$-Koszul.

There are some similarities between the equation
\eqref{E0.0.2} and \eqref{E0.1.1} and it would be very 
interesting these come from a single more general equation.

\begin{question}
\label{quexx0.2} Is there a way of unifying \eqref{E0.0.2} and
\eqref{E0.1.1}?
\end{question}

Theorem \ref{thmxx0.1} has several applications to the study of finite
dimensional Hopf actions on connected graded AS regular algebras.
For the rest of the introduction, we consider Hopf algebra actions
(instead of coactions). Further, we impose the following hypotheses
for the rest of the article unless stated otherwise.

\begin{hypothesis}
\label{hypxx0.3} We assume that
\begin{enumerate}
\item[(i)]
$H$ is a finite dimensional Hopf algebra,
\item[(ii)]
$A$ is a connected graded AS regular algebra,
\item[(iii)]
$H$ acts on $A$ inner-faithfully, and
\item[(iv)]
the $H$-action on $A$ preserves the grading of $A$.
\end{enumerate}
\end{hypothesis}

The first consequence of Theorem \ref{thmxx0.1} is the following result.

\begin{theorem}
\label{thmxx0.4} Let $k$ be an algebraically closed field.  Let $H$
act on a skew polynomial ring $A=k_p[x_1,\cdots,x_n]$ where $p$ is
not a root of unity, then $H$ is a group algebra.
\end{theorem}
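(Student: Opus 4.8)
The plan is to apply Theorem~\ref{thmxx0.1} to the dual Hopf algebra $K := H^*$. Since $H$ is finite dimensional and acts on $A$ inner-faithfully while preserving the grading, $K$ coacts on $A$ inner-faithfully and preserves the grading, and $K$ has bijective antipode; thus the hypotheses of Theorem~\ref{thmxx0.1} are met. First I would record that $A = k_p[x_1,\dots,x_n]$ is quadratic Koszul (so the case $N=2$ applies) and AS regular, and that its graded Nakayama automorphism is diagonal in the basis $\{x_1,\dots,x_n\}$, namely $\mu_A(x_i) = p^{e_i} x_i$ with $e_i = n-2i+1$. The decisive feature is that the exponents $e_1 > e_2 > \cdots > e_n$ are pairwise distinct, so the matrix $M=(m_{ij})$ of $\mu_A$ is diagonal with distinct diagonal entries and $M^{\tau}=M$.

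Feeding this into the definition of $\eta_{\mu_A^\tau}$ from the statement of Theorem~\ref{thmxx0.1} gives, on the generators $y_{ij}$ of $K$,
$$\eta_{\mu_A^\tau}(y_{ij}) = p^{\,e_i - e_j}\, y_{ij}.$$
Hence each $y_{ij}$ is an eigenvector, with eigenvalue $p^{\,e_i-e_j}$, of the automorphism $\Phi := \eta_{{\sf D}} \circ S^2$, which equals $\eta_{\mu_A^\tau}$ by Theorem~\ref{thmxx0.1}. The crux is to show that every eigenvalue of $\Phi$ is a root of unity, which I would get by proving $\Phi$ has finite order: by Radford's theorem the antipode $S$ of the finite dimensional Hopf algebra $K$ has finite order, hence so does $S^2$; the grouplike ${\sf D}$ lies in the finite group of grouplikes of $K$, so $\eta_{{\sf D}}$ has finite order; and since ${\sf D}$ is grouplike we have $S^2({\sf D})={\sf D}$, so $\eta_{{\sf D}}$ and $S^2$ commute and their product $\Phi$ has finite order. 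Writing $\Phi^{\ell}=\id$ forces $p^{\,\ell(e_i-e_j)}=1$ whenever $y_{ij}\neq 0$. As $p$ is not a root of unity and $e_i\neq e_j$ for $i\neq j$, this is impossible, so $y_{ij}=0$ for all $i\neq j$.

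It remains to read off the structure of $K$ and then of $H$. With the off-diagonal coefficients vanishing, the comodule map on $E^1=\bigoplus_i k\,x_i^*$ becomes $\rho(x_i^*) = y_{ii}\otimes x_i^*$, and coassociativity together with counitality make each $y_{ii}$ a grouplike element. Next I would use that $K$ coacts on the Koszul dual $E=\Ext^*_A(k,k)$, a quantum exterior algebra whose degree-two relations pair $x_i^*$ with $x_j^*$; comparing coefficients of the nonzero class $x_i^*x_j^*$ in $E^2$ forces $y_{ii}y_{jj}=y_{jj}y_{ii}$. By inner-faithfulness the elements $\{y_{ij}\}$ generate $K$, hence so do the commuting grouplikes $y_{11},\dots,y_{nn}$; therefore $K=H^*$ is the group algebra $kG$ of the finite abelian group $G=\langle y_{11},\dots,y_{nn}\rangle$. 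Dualizing gives $H\cong (kG)^*$, which, since $k$ is algebraically closed and $G$ is finite abelian, is isomorphic to the group algebra of the character group of $G$; thus $H$ is a group algebra.

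I expect the middle step to be the main obstacle: controlling the order of $\Phi=\eta_{{\sf D}}\circ S^2$ so that the genericity of $p$ can be exploited. Everything hinges on $\Phi$ having roots-of-unity eigenvalues, which follows from finiteness of the order of the antipode together with the compatibility $S^2({\sf D})={\sf D}$; without the commutation of $\eta_{{\sf D}}$ and $S^2$ one could not deduce that $\Phi$ has finite order from finiteness of its two factors. A secondary point needing care is the final dualization from $K$ to $H$, where algebraic closedness of $k$ is used to identify $(kG)^*$ with a genuine group algebra.
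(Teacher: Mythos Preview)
Your proposal is correct and follows essentially the same approach as the paper: the paper deduces Theorem~\ref{thmxx0.4} from the more general Theorem~\ref{thmxx4.3}, whose proof likewise computes the diagonal Nakayama automorphism (there done on $A^!$ directly, yielding the same eigenvalues $q_{ij}=p^{2(j-i)}$), invokes Theorem~\ref{thmxx0.1}, bounds the order of $\eta_{\sf D}\circ S^2$ via Radford and Nichols--Zoeller, kills the off-diagonal $y_{ij}$, and concludes that $K$ is generated by commuting grouplikes so that $H$ is a group algebra. The only differences are presentational---you work with $\mu_A$ rather than $\mu_{A^!}$ and argue finite order of $\Phi$ abstractly rather than with the explicit divisor $2l$---and neither changes the substance of the argument.
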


More generally, a version of Theorem \ref{thmxx0.4} holds for a large class of multi-parameter skew
polynomial rings; see Theorem \ref{thmxx4.3}.

Since $H$ ends up being a group algebra in the theorem above, we
consider this Hopf action to be {\it trivial}, that is to say, $H$ is either
a commutative or a cocommutative Hopf algebra. On the other hand, there are many
non-trivial finite dimensional Hopf algebra actions on skew
polynomial rings $k_p[x_1,\cdots,x_n]$ where $p$ is a root of unity.
For instance, some interesting non-trivial Hopf algebra actions on
$k_p[x_1,x_2]$ are given in \cite{CKWZ}. Thus, Theorem \ref{thmxx0.4}
prompts the question below.

\begin{question}
\label{quexx0.5}
Suppose that $H$ acts on $A$ under the assumptions of Hypothesis
\ref{hypxx0.3}. If $A$ does not contain the commutative polynomial
ring $k[x_1,x_2]$ as a subalgebra, is then $H$ a group algebra?
\end{question}

Theorem \ref{thmxx5.10} provides positive  evidence for Question
\ref{quexx0.5}. Another consequence of Theorem \ref{thmxx0.1} is
following result.

\begin{theorem}
\label{thmxx0.6} Suppose that $\ch k$ does not divide the dimension of $H$. Assume also that $A$ is
$N$-Koszul and so-called $r$-Nakayama. If the $H$-action on $A$ has
trivial homological determinant (Definition~\ref{defxx1.6}(b)), then $H$ is semisimple.
\end{theorem}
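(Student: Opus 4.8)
The plan is to pass from the $H$-action to the dual $K$-coaction and then apply Theorem~\ref{thmxx0.1}. Since $H$ is finite dimensional, its antipode is bijective and $K:=H^*$ is a Hopf algebra with bijective antipode $S$ that coacts on $A$ from the right; the inner-faithfulness of the $H$-action translates into inner-faithfulness of the $K$-coaction under the module/comodule duality. Under this same duality, the homological determinant of the $H$-action corresponds to the homological codeterminant of the $K$-coaction, so the hypothesis that the homological determinant is trivial (Definition~\ref{defxx1.6}(b)) says precisely that the homological codeterminant is the identity element ${\sf D}=1\in K$. Because $A$ is $N$-Koszul AS regular with $N\ge 2$, all the hypotheses of Theorem~\ref{thmxx0.1} are met for the pair $(A,K)$.

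First I would feed ${\sf D}=1$ into \eqref{E0.1.1}. Conjugation by $1$ is the identity, so $\eta_{\sf D}=\id$ on $K$ and Theorem~\ref{thmxx0.1} collapses to
\[
S^2=\eta_{\mu_A^\tau}
\]
as automorphisms of $K$. Next I would use the $r$-Nakayama hypothesis to identify the right-hand side with the identity: this condition guarantees that $\mu_A$ acts on the generating space $A_1$ as a scalar, so that the matrix $M=(m_{ij})$ of $\mu_A$ in the basis $\{x_1,\dots,x_n\}$ is a scalar matrix $\lambda I$. Then $m_{si}=\lambda\delta_{si}$ and $n_{jt}=\lambda^{-1}\delta_{jt}$, and the defining formula gives
\[
\eta_{\mu_A^\tau}(y_{ij})=\sum_{s,t=1}^n m_{si}\,y_{st}\,n_{jt}=\lambda\lambda^{-1}y_{ij}=y_{ij}
\]
on the Hopf algebra generators $y_{ij}$ of $K$, so $\eta_{\mu_A^\tau}=\id$. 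Combining the two steps yields $S^2=\id$ on $K$.

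Finally I would return to $H$ and invoke a Larson--Radford type theorem. Since the antipode of $K=H^*$ is the transpose of the antipode of $H$, the identity $S^2=\id$ on $K$ is equivalent to $S_H^2=\id$. In characteristic $0$ this already forces $H$ to be semisimple. In general, Radford's trace formula gives $\tr(S_H^2)=\dim_k H$ once $S_H^2=\id$, so the hypothesis $\ch k\nmid\dim H$ makes $\tr(S_H^2)\ne 0$, and the (co)semisimplicity criterion then yields that $H$ is semisimple (in fact semisimple and cosemisimple).

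The main obstacle is exactly this last step in positive characteristic: unlike the characteristic-zero case, $S^2=\id$ alone does \emph{not} force semisimplicity (Taft algebras show that a finite-order antipode is far from sufficient), so one must genuinely exploit $\ch k\nmid\dim H$ through the trace formula relating $\tr(S^2)$ to the integrals of $H$. A secondary point requiring care is the bookkeeping of the action/coaction duality --- matching inner-faithfulness on both sides and matching the homological determinant with the homological codeterminant --- so that Theorem~\ref{thmxx0.1} legitimately applies to $K=H^*$ and the conclusion transfers back to $H$.
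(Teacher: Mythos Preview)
Your proof is correct and follows essentially the same route as the paper: pass to the dual $K$-coaction, apply Theorem~\ref{thmxx0.1} with ${\sf D}=1$ and the scalar Nakayama matrix to obtain $S^2=\id$ on $K$, and then deduce semisimplicity from $\ch k\nmid\dim H$. The paper asserts this last implication without further comment; your justification via the Radford trace formula $\tr(S^2)=\epsilon(\Lambda)\lambda(1)$ is exactly the right one (though the cautionary example should be a group algebra $kG$ with $\ch k\mid |G|$ rather than a Taft algebra, since Taft algebras are not involutory).
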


Examples of $r$-Nakayama algebras include the Sklyanin algebras of
dimension~3 and 4, commutative polynomial rings $k[x_1,\cdots,x_n]$,
and the skew polynomial ring $k_{-1}[x_1,\cdots,x_n]$. 
Hence, Theorem
\ref{thmxx0.6} applies to such algebras. Moreover, if $\ch k =0$, then as a consequence of \cite[Theorem 0.1]{KKZ:Gorenstein},
 a pair $(H,A)$ that satisfies the hypotheses of Theorem
\ref{thmxx0.6} yields Artin-Schelter Gorenstein invariant subring $A^H$.

In the case when $H$ is semisimple, we show that there are only
trivial Hopf algebra actions on the commutative polynomial ring of
two variables. Note that this is a special case of \cite[Theorem~1.3]{EtingofWalton:ssHopf}.

\begin{proposition}
\label{proxx0.7}
Suppose $H$ is semisimple and acts on the commutative polynomial
ring $A=k[x_1,x_2]$, where $k$ is an algebraically closed field of
characteristic 0.  Then $H$ is a group algebra.
\end{proposition}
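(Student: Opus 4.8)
The plan is to pass to the dual Hopf algebra and reduce the statement to the commutativity of $K := H^*$. Since $H$ is finite dimensional, the $H$-action on $A$ dualizes to an inner-faithful right $K$-coaction making $A$ a $K$-comodule algebra; write $\rho(x_i)=\sum_j x_j\otimes a_{ji}$, so that the matrix coefficients $\{a_{ij}\}$ generate $K$. Because $H$ is semisimple and $\ch k=0$, the Larson--Radford theorem gives that $H$ is also cosemisimple with $S_H^2=\id$; dually $K$ is semisimple, cosemisimple, and $S_K^2=\id$. If I can show that $K$ is commutative, then $H=K^*$ is cocommutative, and by the Cartier--Gabriel--Kostant theorem (over an algebraically closed field of characteristic $0$) $H$ is the group algebra of its grouplike elements. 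Thus everything reduces to proving that $K$ is commutative.

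Next I would feed the present situation into Theorem~\ref{thmxx0.1}. The algebra $A=k[x_1,x_2]$ is Koszul ($N=2$), AS regular, and Calabi--Yau, so $\mu_A=\id$ and the associated matrix is $M=I$; hence $\eta_{\mu_A^\tau}=\id$. Since $S^2=\id$ on $K$, equation~\eqref{E0.1.1} collapses to $\eta_{\D}=\id$, i.e. the homological codeterminant $\D\in K$ is central; for the commutative plane $\D$ is (up to convention) the quantum determinant $a_{11}a_{22}-a_{12}a_{21}$, a grouplike element. Moreover, requiring $\rho$ to respect the single quadratic relation $x_1x_2=x_2x_1$ of $A$ yields
\begin{equation*}
a_{11}a_{12}=a_{12}a_{11},\qquad a_{21}a_{22}=a_{22}a_{21},\qquad a_{11}a_{22}+a_{21}a_{12}=a_{22}a_{11}+a_{12}a_{21},
\end{equation*}
which, together with the centrality and invertibility of $\D$, comprise all the structure we may exploit.

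Now I would use cosemisimplicity of $K$ to split $A_1$ into simple subcomodules. If $A_1$ decomposes as a sum of two one-dimensional comodules, choose $\{x_1,x_2\}$ adapted to this decomposition, so that $a_{12}=a_{21}=0$ and $a_{11},a_{22}$ are grouplike; the third relation above then reads $a_{11}a_{22}=a_{22}a_{11}$, so these two grouplikes commute, and since they generate $K$ we conclude $K$ is commutative. The remaining, genuinely harder, case is when $A_1$ is a simple two-dimensional $K$-comodule. Here the quadratic relations alone do not force commutativity, and cosemisimplicity must be used essentially: the plan is to extend the coaction to the fraction field $Q=k(x_1,x_2)$ (on which $K$ still coacts inner-faithfully, since a nonzero Hopf ideal annihilating $Q$ would annihilate $A\subset Q$) and to apply the fact that a semisimple Hopf algebra acting inner-faithfully on a field in characteristic $0$ is a group algebra---equivalently, to invoke \cite[Theorem 1.3]{EtingofWalton:ssHopf} directly---which forces $K$ commutative in this case too.

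The main obstacle is precisely this last case: ruling out a genuinely noncommutative cosemisimple Hopf algebra $K$ carrying a simple two-dimensional comodule that satisfies the three displayed relations with central grouplike determinant. As the reduction above indicates, the identity extracted from Theorem~\ref{thmxx0.1} (namely $\D$ central) together with $S^2=\id$ is automatically consistent with such quantum deformations, so no purely relation-theoretic manipulation of the $a_{ij}$ will suffice; the finiteness and (co)semisimplicity of $K$ must be brought in, and the cleanest way to harness them is the passage to the fraction field described above.
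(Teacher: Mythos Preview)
Your setup (passing to $K=H^\circ$, using Larson--Radford to get $S_K^2=\id$, and aiming to prove $K$ is commutative) matches the paper. The divergence is in how commutativity of $K$ is established, and there is a genuine gap in your approach.

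You extract from Theorem~\ref{thmxx0.1} only that ${\sf D}$ is central, then correctly observe that this plus the three relations coming from $\rho(x_1x_2-x_2x_1)=0$ do not force the $a_{ij}$ to commute. At that point you fall back on \cite[Theorem~1.3]{EtingofWalton:ssHopf}. But the paper states explicitly that Proposition~\ref{proxx0.7} is a special case of that theorem; the whole purpose here is to give an independent proof using the machinery of this paper, so invoking Etingof--Walton defeats the point. Your ``main obstacle'' is therefore self-imposed: you have used too weak a consequence of $S^2=\id$.

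What the paper does instead is purely relation-theoretic and avoids the case split entirely. Rather than only reading off ``${\sf D}$ central'' from \eqref{E0.1.1}, the paper computes $S(a_{ij})$ explicitly via Lemma~\ref{lemxx3.1}(c) (this is Lemma~\ref{lemxx5.2}), then imposes $S^2(a_{ij})=a_{ij}$ directly. This yields four additional relations (Lemma~\ref{lemxx5.3} with $p=1$):
\[
a_{21}a_{11}=a_{11}a_{21},\quad a_{22}a_{12}=a_{12}a_{22},\quad a_{21}a_{12}=a_{12}a_{21},\quad a_{22}a_{11}=a_{11}a_{22}.
\]
Combined with your three relations (Lemma~\ref{lemxx5.1} with $p=1$), all six commutators $[a_{ij},a_{kl}]$ vanish, so $K$ is a quotient of Takeuchi's $GL_{1,1}(2)=\mathcal{O}(GL_2)$, which is commutative (Proposition~\ref{proxx5.4}). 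No appeal to cosemisimplicity, comodule decomposition, or fraction fields is needed.
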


In summary, if $H$ acts on $A$ inner-faithfully,
then algebraic properties
of $A$ affect the structure of $H$. We also conjecture that the converse relationship holds.

\medskip

This article is organized as follows. In Section~\ref{secxx1}, we provide
background material on Artin-Schelter regularity, Nakayama automorphisms, Hopf algebra actions, and other topics. Section~\ref{secxx2}  introduces several quantum groups associated to
a graded algebra, which first appeared in \cite{Manin:QGNCG}. We
provide preliminary results on Hopf actions on Frobenius algebras in
Section~\ref{secxx3}. Moreover, we prove the main results, namely Theorems
\ref{thmxx0.1}, \ref{thmxx0.4}, and \ref{thmxx0.6}, in Section~\ref{secxx4}. We
also discuss $N$-Koszul algebras in Section~\ref{secxx4}. Finally in Section~\ref{secxx5},
we provide examples of the main theorems and the quantum groups of Section~\ref{secxx2} by considering actions on AS regular algebras
of global dimension 2. Namely, see Propositions~\ref{proxx5.4},~\ref{proxx5.5},~\ref{proxx5.9} and Theorem~\ref{thmxx5.10}. We also prove Proposition \ref{proxx0.7} in
Section~\ref{secxx5}.

All vector spaces, algebras, and rings are over the base field $k$.
The unmarked tensor $\otimes$ means $\otimes_k$. The $k$-linear dual
of a vector space $V$ is denoted by $V^*$. In general, we use $H$
(respectively, $K$) to denote a Hopf algebra that acts
(respectively, coacts) on an algebra $A$.

\section{Background Material}
\label{secxx1}

This section
contains some preliminary material that is needed for this article.
We refer to \cite{Montgomery} for basic definitions regarding Hopf
algebras.

\subsection{Artin-Schelter regularity}

This article focuses on the actions of Hopf algebras on certain algebras; such algebras are given as follows.

\begin{definition}
\label{defxx1.1}
Let $A$ be a locally finite, connected, $\mathbb{N}$-graded $k$-algebra. Namely, $A = \bigoplus_{i \geq 0} A_i$ with $A_i A_j \subset A_{i+j}$, $A_0=k$, and $\dim_k A_i < \infty$. We say that $A$ is
{\it Artin-Schelter Gorenstein} (or {\it AS Gorenstein}) if
\begin{enumerate}
\item[(i)] $A$ has finite injective dimension $d < \infty$, and
\item[(ii)] $\Ext^i_A(_Ak,_AA) \cong \Ext^i_A(k_A,A_A)=
\begin{cases} 0& i\neq d\\
k(l) &i=d\end{cases}\quad$ for some
$l \in \mathbb{Z}$.

\noindent
We call $l$ the {\it AS index} of $A$.
\end{enumerate}
If further
\begin{enumerate}
\item[(iii)] $A$ has finite global dimension,
\end{enumerate}
then $A$ is called {\it Artin-Schelter regular} (or {\it AS regular}).
\end{definition}

In this paper, we are not assuming that $A$ has finite Gelfand-Kirillov
dimension as in the standard definition of AS regularity.

\subsection{Frobenius algebras and classical Nakayama automorphisms}

Next we recall the definition of a Frobenius algebra. Let $(G,+)$
be an abelian group (such as ${\mathbb Z}^d$ for $d\geq 0$).
A $G$-graded, finite dimensional, unital, associative algebra $B$ is
called {\it Frobenius} if there is a
nondegenerate associative bilinear form
$$\langle-,-\rangle: B\times B\to k$$
which is graded of degree $-l$ for some $l\in G$. If
$G$ is $\{0\}$, then this is the classical definition of a
Frobenius algebra.

If $B$ is a connected graded algebra, then $B$ is a Frobenius
algebra if and only if $B$ is AS Gorenstein of injective dimension
zero. We call $l$ the {\it AS index} of $B$, which agrees with the
AS index defined in Definition \ref{defxx1.1}(ii) when $B$ is
connected graded.  Moreover, $B$ is Frobenius if
the $k$-linear dual $B^*$ is isomorphic to $B(-l)$ as graded left
$B$-modules. A nice discussion about Frobenius algebras can be
found in \cite{Smith:elliptic}.

The {\it classical Nakayama automorphism} of  a Frobenius algebra
$B$ is a $G$-graded algebra automorphism $\mu_B$ of $B$ satisfying
$$\langle a,b\rangle=\langle b, \mu_B(a)\rangle$$
for all $a,b\in B$. It is well-known that the $k$-linear dual $B^*$
is isomorphic to ${^{\mu_B} B^1}(-l)$ as $B$-bimodules.

\subsection{Hopf algebra actions}

If $H$ is a Hopf algebra, we denote by $H^\circ$  its Hopf dual as
in \cite[Theorem 9.1.3]{Montgomery}. If $H$ is finite dimensional,
then $H^\circ=H^*$ as a $k$-vector space. We say that $H$ {\it
(co)acts} on an algebra $A$ if $A$ arises as an $H$-{\it (co)module
algebra}; refer to \cite[Definitions 4.1.1 and 4.1.2]{Montgomery}
for the definition of a $H$-module algebra and a $H$-comodule
algebra. Note that  if $H$ is finite dimensional, then $M$ is a left
$H$-module if and only if $M$ is a right $H^{\circ}$-comodule. We
also provide the definition of inner-faithfulness.

\begin{definition}
\label{defxx1.2}
Let $M$ be a left $H$-module. We say that the $H$-action on $M$ is
{\it inner-faithful} \cite[Definition 2.7]{BanicaBichon} if $IM\neq 0$
for any nonzero Hopf ideal $I\subset H$. Let $N$ be a right $H$-comodule
with comodule structure map $\rho: N \rightarrow N \otimes H$. We say
that this coaction is {\it inner-faithful} if $\rho(N)\not\subset
N\otimes H'$ for any proper Hopf subalgebra $H'\subsetneq H$.
\end{definition}

Here is an easy lemma about inner-faithfulness and its proof is omitted.

\begin{lemma}
\label{lemxx1.3}
Let $H$ be a Hopf algebra and $K:=H^{\circ}$. Let $M$ be a left
$H$-module, or equivalently, a right $K$-comodule with comodule
structure map $\rho: M \rightarrow M \otimes K$.
\begin{enumerate}
\item
If $H$ is finite dimensional, then the left $H$-action on $M$ is
inner-faithful if and only if the right $K$-coaction on $M$ is
inner-faithful.
\item
Suppose $\{x_i\}$ is a basis of $M$ and assume that $\rho(x_i)=
\sum_s x_s\otimes a_{si}$ for some elements $a_{si}\in K$. Then, the
right $K$-coaction on $M$ is inner-faithful if and only if the subalgebra
generated by $\{a_{si}, S^n(a_{si})\}_{s,i,n}$ is $K$.
\item
Let $R$ be an algebra generated by the $H$-module $M$. Suppose that
$R$ is an $H$-module algebra with $H$-action induced by the $H$-action on $M$.
Then, the $H$-action on $R$ is inner-faithful if and only if the $H$-action
on $M$ is inner-faithful.  \qed
\end{enumerate}
\end{lemma}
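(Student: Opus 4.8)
\emph{Proof proposal.} The plan is to handle the three parts separately by means of the standard dictionary between modules and comodules. Throughout, when $K=H^\circ$ I write $\langle -,-\rangle \colon K \times H \to k$ for the evaluation pairing, and I use that the left $H$-action and right $K$-coaction on $M$ are related by $h\cdot m = \sum m_{(0)}\,\langle m_{(1)},h\rangle$, where $\rho(m)=\sum m_{(0)}\otimes m_{(1)}$.

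For part (a), I would first translate the condition $IM=0$ into comodule language. Fixing a $k$-basis $\{x_i\}$ of $M$ and writing $\rho(m)=\sum_i x_i\otimes c_i(m)$, the relation above gives $h\cdot m=\sum_i x_i\,\langle c_i(m),h\rangle$, so for a two-sided ideal $I$ one has $IM=0$ if and only if every matrix coefficient $c_i(m)$ annihilates $I$, that is, if and only if $\rho(M)\subseteq M\otimes I^{\perp}$, where $I^\perp=\{f\in H^*:\langle f,I\rangle=0\}$. When $H$ is finite dimensional the assignment $I\mapsto I^{\perp}$ is an inclusion-reversing bijection from Hopf ideals of $H$ onto Hopf subalgebras of $K=H^*$ (see \cite{Montgomery}), under which nonzero ideals correspond exactly to proper subalgebras, since $K$ separates the points of $H$. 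Combining these two facts, the existence of a nonzero Hopf ideal $I$ with $IM=0$ is equivalent to the existence of a proper Hopf subalgebra $K'=I^\perp$ with $\rho(M)\subseteq M\otimes K'$; taking contrapositives then gives the asserted equivalence of inner-faithfulness.

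For part (b) I would exploit the comatrix structure of the $a_{si}$. Coassociativity and counitality of $\rho$ force $\Delta(a_{ti})=\sum_s a_{ts}\otimes a_{si}$ and $\epsilon(a_{si})=\delta_{si}$, so $C:=\operatorname{span}_k\{a_{si}\}$ is a subcoalgebra of $K$; since $S$ is a coalgebra anti-homomorphism, each $S^n(C)$ is again a subcoalgebra. Let $B$ be the subalgebra generated by $\bigcup_{n\ge 0}S^n(C)=\operatorname{span}_k\{S^n(a_{si})\}$. Because $\Delta$ is an algebra map and $\Delta(S^n(C))\subseteq S^n(C)\otimes S^n(C)\subseteq B\otimes B$, the subalgebra $B$ is a subbialgebra, and it is stable under $S$ since $S(S^n(a_{si}))=S^{n+1}(a_{si})$ and $S$ is anti-multiplicative; hence $B$ is a Hopf subalgebra. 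Conversely, any Hopf subalgebra containing all $a_{si}$ must contain every $S^n(a_{si})$ together with their products, hence contains $B$, so $B$ is the smallest Hopf subalgebra containing the $a_{si}$. Finally, since $\{x_s\}$ is a basis, $\rho(M)\subseteq M\otimes K'$ holds for a Hopf subalgebra $K'$ precisely when $\{a_{si}\}\subseteq K'$, i.e. when $K'\supseteq B$; thus no proper Hopf subalgebra captures $\rho(M)$ if and only if $B=K$, which is the claim.

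For part (c), since $M\subseteq R$ the implication $IR=0\Rightarrow IM=0$ is immediate, so it remains to establish $IM=0\Rightarrow IR=0$ for a Hopf ideal $I$. I would use that a Hopf ideal satisfies $\epsilon(I)=0$ and is a coideal, so $\Delta^{(k-1)}(I)$ lies in $\sum_j H\otimes\cdots\otimes I\otimes\cdots\otimes H$ with $I$ in at least one slot. Applying the module-algebra axiom $h\cdot(m_1\cdots m_k)=\sum (h_{(1)}\cdot m_1)\cdots(h_{(k)}\cdot m_k)$ to a product of generators $m_j\in M$, every summand then contains a factor $h_{(j)}\cdot m_j$ with $h_{(j)}\in I$, which vanishes by hypothesis; moreover $h\cdot 1=\epsilon(h)1=0$ for $h\in I$. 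As $R$ is generated by $M$, this yields $IR=0$ and hence the desired equivalence. The only genuinely delicate point in the whole argument is the bookkeeping in part (b)---verifying that the $S$-orbit of the comatrix coalgebra already generates a subalgebra closed under both $\Delta$ and $S$; everything else reduces to the action/coaction dictionary together with the perp-duality cited in part (a).
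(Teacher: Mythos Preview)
Your proof is correct. The paper explicitly omits the proof of this lemma (calling it an ``easy lemma'' whose ``proof is omitted''), so there is no argument to compare against; your treatment of the three parts via the standard action/coaction dictionary, the perp-duality between Hopf ideals of $H$ and Hopf subalgebras of $H^*$, the comatrix-coalgebra argument for part (b), and the coideal property of Hopf ideals for part (c) is precisely the kind of direct verification the authors presumably had in mind.
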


\subsection{Homological (co)determinant of a Hopf (co)action}

We now recall the definition of homological determinant in two
different settings:
\begin{enumerate}
\item[(1)]
The original definition given in \cite{KKZ:Gorenstein} when the Hopf algebra $H$ is finite
dimensional and $A$ is noetherian connected graded AS Gorenstein;
\item[(2)]
$H$ is infinite dimensional and $A$ is (not necessarily
noetherian) AS regular.
\end{enumerate}
Since there is no known uniform definition to cover
 these  cases, we
present below the various definitions of homological (co)determinant.
\smallskip

\noindent  \underline{Case (1):} Let $A$ be a noetherian connected
graded AS Gorenstein algebra with
AS index $l$. Let $d$ be the injective dimension of $A$ and let
$R^d\Gamma_{\mathfrak m}(A)$ be the $d$-th local cohomology of $A$. Then
the $k$-linear dual $R^d\Gamma_{\mathfrak m}(A)^*$ is isomorphic to
the graded $A$-bimodule ${^\mu A^1}(-l)$ where $\mu$ is the Nakayama
automorphism of $A$. Suppose $H$ acts on $A$ such that $A$ is a
left $H$-module algebra. When $H$ is finite dimensional, then
$H$ acts on $R^d\Gamma_{\mathfrak m}(A)^*$ from the right; see
\cite[Section 3]{KKZ:Gorenstein} for more details.
Let ${\mathfrak e}\in R^d\Gamma_{\mathfrak m}(A)^*$ be a basis element
of the lowest degree (i.e., degree $l$). Then, there is an algebra
homomorphism $\gamma: H\to k$ such that
$${\mathfrak e}\cdot h=\gamma(h) {\mathfrak e}$$
for all $h\in H$.

\begin{definition} \label{defxx1.4}
Retain the notation above.
\begin{enumerate}
\item
By \cite[Definition 3.3]{KKZ:Gorenstein}, the
composition $\gamma\circ S: H\to k$ is called the {\it homological
determinant} of the $H$-action on $A$ and it is denoted by $\hdet_H A$.
If $\hdet_H A = \epsilon$, the counit, then we say that the homological
determinant is {\it trivial}.
\item
Let $K(=H^{\circ})$ be a Hopf algebra coacting on $A$ from the right
(respectively, from the left)
via the $K$-comodule map $\rho: A \rightarrow A \otimes K$ (respectively,
$\rho: A\to K\otimes A$). The {\it
homological codeterminant} of the $K$-coaction on $A$, denoted by
$\hcodet_KA$, is defined to be the element ${\sf D} \in K$ such that
$\rho({\mathfrak e}) ={\mathfrak e}\otimes {\sf D}^{-1}$ (respectively, 
$\rho({\mathfrak e}) = {\sf D}\otimes {\mathfrak e}$). Note that
${\sf D}$ and ${\sf D}^{-1}$ are necessarily grouplike (whence
invertible) elements. We say the homological codeterminant is {\it
trivial} if $\hcodet_KA=1_K$.
\end{enumerate}
\end{definition}

\noindent
\underline{Case (2):}
Let $A$ be an AS regular algebra for this case. By \cite[Corollary
D]{LPWZ:Koszul}, a connected graded algebra $A$ is AS regular if and only
if the $\Ext$-algebra $E$ of $A$ is Frobenius. Here,

\begin{equation}
\label{E1.5.1}\tag{E1.5.1} E=\bigoplus_{i\geq 0} \Ext^i_A(_Ak,_Ak).
\end{equation}

\begin{definition}
\label{defxx1.6}
Let $A$ be an AS regular algebra with Frobenius $\Ext$-algebra $E$ as above.
Suppose ${\mathfrak e}$ is a nonzero element in
$\Ext^d_A(_Ak,_Ak)$ where $d=\gldim A$.
\begin{enumerate}
\item
Let $H$ be a Hopf algebra with bijective antipode $S$
acting on $A$ from the left. By
\cite[Lemma 5.9]{KKZ:Gorenstein}, $H$ acts on $E$ from the left. The
{\it homological determinant} of the $H$-action on $A$, denoted by
$\hdet_H A$, is defined to be
$\eta\circ S$ where $\eta: H\to k$ is determined by
$$h\cdot {\mathfrak e}=\eta(h) {\mathfrak e}.$$
We say the homological determinant is {\it trivial} if $\hdet_H A=\epsilon$
where $\epsilon$ is the counit of $H$.
\item
Let $K$ be a Hopf algebra with bijective antipode $S$ coacting on
$A$ from the right (respectively, from the left). By a dual version of \cite[Lemma
5.9]{KKZ:Gorenstein}, or Remark \ref{remxx1.7}(d), $K$ coacts on $E$
from the left (respectively, from the right). The {\it homological codeterminant} of the
$K$-coaction on $A$, denoted by $\hcodet_K A$, is defined to be ${\sf
D}$  where $\rho({\mathfrak e})={\sf D}\otimes {\mathfrak e}$ (respectively, $\rho(\mathfrak{e}) = \mathfrak{e} \otimes \sf{D}^{-1}$) for
some grouplike element ${\sf D} \in K$. We say the homological
codeterminant is {\it trivial} if $\hcodet_K A=1_K$.
\end{enumerate}
\end{definition}

Now we make some remarks concerning these definitions. If $H$ is
finite dimensional and if $A$ is noetherian, then
Definition~\ref{defxx1.6}(a) agrees with \cite[Definition~3.3]{KKZ:Gorenstein}
by \cite[Lemma~5.10(c)]{KKZ:Gorenstein}. Moreover under this
assumption, Definition~\ref{defxx1.6}(b)
agrees with \cite[Definition~6.2]{KKZ:Gorenstein} by \cite[Remark~6.3 and
Lemma~5.10(c)]{KKZ:Gorenstein}. Definition \ref{defxx1.6}(a) also appears in
\cite[Definition 1.11]{LWZ}.

Note that the term ``homological determinant'' is prompted by group
actions on commutative polynomial rings. More precisely, in the case
that $A$ is a commutative polynomial ring $k[x_1, \dots, x_n]$, and
$H = kG$, for $G$ a finite subgroup of $GL_n(k)$, $\hdet_HA$ becomes
the ordinary determinant $\det_G : G \rightarrow k$ \cite[Remark
3.4(c)]{KKZ:Gorenstein}. Hence, the homological determinant of the
$H$-action on $A$ is trivial in this case if and only if $G \subseteq
SL_n(k)$.

\begin{remark}
\label{remxx1.7}
Let $H$ and $K$ be Hopf algebras.
Let $A$ be a left $H$-module algebra, and $E$ be the
$\Ext$-algebra $\bigoplus_{i\geq 0}\Ext^i_A(_Ak,_Ak)$.
\begin{enumerate}
\item
In general, $E$ is not a left $H$-module algebra.
\item \cite[Lemma 5.9(d)]{KKZ:Gorenstein}
If $H$ has a bijective antipode $S$, then the opposite ring $E^{op}$
of $E$ is naturally a left $H$-module algebra. As a consequence, $E$
is a right $H$-module algebra induced by the left action of $H$ on
$E^{op}$.
\item
The left $K$-coaction in Definition \ref{defxx1.6}(b) is induced by
the right $K$-coaction given in \cite[Definition 6.2]{KKZ:Gorenstein}.
\item
Suppose $A$ is AS regular and $K$ has bijective antipode.
If $A$ is a right $K$-comodule algebra, then $E$ is a left
$K$-comodule algebra.
\end{enumerate}
\end{remark}

We now provide reasoning for Remark \ref{remxx1.7}(d) as follows.
Let $A-\GrMod^K$ be the category of graded left-right $(A,K)$-Hopf
modules. It is a Grothendieck category with enough injective
objects. Let $A-\Per^K$ be the full subcategory of $A-\GrMod^K$
consisting of objects having a finite free resolution. If $I$ is an
injective object in $A-\GrMod^K$, then \cite[Lemma~2.2(3),
Propositions~2.8(2) and~2.9]{CaenepeelGuedenon} imply that
$\Ext^p_A(M,I)=0$, for all $p>0$ and for all $M\in A-\Per^K$. In
other words, every injective object $I$ in $A-\GrMod^K$ is
$\Hom_A(M,-)$-acyclic for any $M$ in $A-\Per^K$. Note that if $A$ is
AS regular, then $k$ is in $A-\Per^K$ \cite[Proposition
3.1.3]{StephensonZhang}.
 As a consequence of acyclicity of $I$, $\Ext^i_A(k,k)$ can be computed by
using an injective resolution of the second $k$ in $A-\GrMod^K$. By
\cite[Lemma 2.2(3)]{CaenepeelGuedenon}, each $\Ext^i_A(k,k)$ is a
right $K$-comodule. Therefore $E$ is a right $K$-comodule. Applying
the antipode $S$, we have that $E$ is naturally a left $K$-comodule.
To show that $E$ is a left $K$-comodule algebra, one needs to repeat
a similar argument in the proof of \cite[Lemma~5.9]{KKZ:Gorenstein},
but the details are omitted here.

\subsection{$r$-Nakayama algebras}

Let $r\in k^\times$. Let $A$ be a connected
${\mathbb N}$-graded algebra. Define a graded algebra automorphism
$\xi_r$ of $A$ by
$$\xi_r(x)=r^{\deg x} \; x$$
for all homogeneous elements $x\in A$.

\begin{definition}
\label{defxx1.8} Let $A$ be a connected graded AS Gorenstein algebra
such that the Nakayama automorphism exists (defined as in
\eqref{E0.0.1}) and let $r\in k^\times$. We say $A$ is {\it
$r$-Nakayama} if the Nakayama automorphism of $A$ is $\xi_r$. 
\end{definition}

Note
that $A$ is {\it Calabi-Yau} if and only if $A$ is both $1$-Nakayama
and AS regular.

\section{Manin's quantum linear groups}
\label{secxx2}

In this section, we recall Manin's construction of the quantum
groups $\mathcal{O}_A(M)$, $\mathcal{O}_A(GL)$, $\mathcal{O}_A(SL)$,
and $\mathcal{O}_A(GL/S^2)$ associated to an algebra $A$
\cite{Manin:QGNCG}. We review this mainly in the case when $A$ is a quadratic
algebra, but we also remark that the same ideas apply to non-quadratic
algebras at the end of the section. Note that examples of this material
 are provided in Section~\ref{secxx5}.

\subsection{Hopf actions on quadratic algebras}

First, we introduce some notation. Let $A$ be a quadratic algebra
generated by
$x_1,x_2,\cdots,x_n$ in degree 1, subject to the relations
$$r_w:=\sum c^{ij}_w x_i x_j=0$$
for $w=1,\cdots, m$, where $c^{ij}_w\in k$ \cite[Chapter 5]{Manin:QGNCG}.
We assume that $\{r_1,\cdots, r_m\}$ is linearly independent. We
will construct a universal Hopf algebra that coacts on $A$
(Definition \ref{defxx2.6}, Lemma \ref{lemxx2.7}).

Let $F$ be a free algebra generated by $\{y_{ij}\}_{1\leq i,j\leq n}$.
Consider a bialgebra structure on $F$ defined by
\begin{equation}
\label{E2.0.1}\tag{E2.0.1}
\Delta(y_{ij})=\sum_{s=1}^n y_{is}\otimes y_{sj} \quad \text{and} \quad
\epsilon(y_{ij})=\delta_{ij},
\end{equation}
for all $1\leq i,j \leq n$. The free
algebra $A':=k\langle x_1,\cdots,x_n\rangle$ is a right $F$-comodule
algebra with comodule structure map $\rho: A'\to A'\otimes F$ determined by
$$\rho(x_i)=\sum_{s=1}^n x_s\otimes y_{si}$$
for $1\leq i \leq n$.

\begin{remark}
\label{remxx2.1}
Note that Manin uses left coactions in \cite{Manin:QGNCG}\cite{Manin:Topics},
yet the comodule structure map $\rho$ above corresponds to
a right coaction. By symmetry, all results in \cite{Manin:QGNCG} hold
for right coactions. In fact, later in this work, we will use both
left and right coactions.
\end{remark}

Let $A=k\langle x_1,\cdots,x_n\rangle/(R)$ where $R=\oplus_{i=1}^m k
r_i\subset A_1^{\otimes 2}$. To determine a coaction of $K$ on $A$
induced by the coaction of $K$ on $A'$ above, we use the following
explicit construction. Let $V=A_1$ and $V^*$ be the $k$-linear dual
of $V$. Identify $(V^*)^{\otimes 2}$ with $(V^{\otimes 2})^*$
naturally. The {\it Koszul dual} of $A$, denoted by $A^!$, is an
algebra $k\langle V^*\rangle/(R^{\perp})$ where $R^{\perp}$ is the
subspace
$$R^{\perp}:=\{ a\in (V^*)^{\otimes 2}\mid \langle a, r_w
\rangle=0, {\text{for all $w$}}\}=\{ a\in (V^*)^{\otimes 2}\mid
\langle a, R \rangle=0\}.$$ By duality,
$$R=\{ r\in V^{\otimes 2}\mid \langle R^{\perp}, r\rangle=0\}.$$
Pick a basis for $R^{\perp}$, say $r'_{u}$ for $u=1,\cdots, n^2-m$,
and write
$$r'_u=\sum_{i,j}d^{ij}_u x^*_i x^*_j$$
for all $1\leq u\leq n^2-m$. By the definition of $R^{\perp}$, we
have that
\begin{equation}\label{E2.1.1}\tag{E2.1.1}
\sum_{i,j} d^{ij}_u c_w^{ij}=0
\end{equation}
for all $u,w$. The following lemma is well-known.

\begin{lemma} \cite[Lemmas 5.5 and 5.6]{Manin:QGNCG}
\label{lemxx2.2} Let $K$ be a bialgebra coacting on the free algebra
$A'= k \langle x_1, \dots, x_n \rangle$ with $\rho(x_i)=\sum_{s=1}^n
x_s \otimes a_{si}$ for $a_{ij}\in K$. Then the following are
equivalent.
\begin{enumerate}
\item
The coaction $\rho : A' \longrightarrow A' \otimes K$ satisfies $\rho \left( R
\right) \subseteq R \otimes K$, that is, 
$$\sum_{i,j,k,l} c_w^{ij} d^{kl}_u a_{ki}a_{lj}=0$$
for all $w,u$.

\item
The map $\rho$ induces naturally a coaction of $K$ on $A$ such that
$A$ is a right $K$-comodule algebra. \qed
\end{enumerate}
\end{lemma}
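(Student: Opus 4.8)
The plan is to read condition (b) as the assertion that the two-sided ideal $(R)$ is a subcomodule of $A'$, and then to translate that assertion into the numerical identity in (a) by pairing the relevant degree-two element against the Koszul dual relations $R^\perp$. First I would note that $\rho$ is a \emph{graded} algebra homomorphism: since $\rho(x_i)\in V\otimes K$ (with $V=A_1$) and $\rho$ is multiplicative, $\rho(V^{\otimes d})\subseteq V^{\otimes d}\otimes K$ for every $d$. The map $\rho$ descends to a coaction $\bar\rho\colon A\to A\otimes K$ making $A$ a right $K$-comodule algebra precisely when $\rho((R))\subseteq (R)\otimes K$, where $(R)$ is the ideal generated by $R$; in that case $\bar\rho$ is automatically an algebra map since $\rho$ and the quotient map $A'\to A$ are. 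Because $(R)$ is homogeneous with $(R)\cap V^{\otimes 2}=R$ and $\rho$ is graded, the inclusion $\rho((R))\subseteq(R)\otimes K$ is equivalent to $\rho(R)\subseteq R\otimes K$: one direction follows by intersecting with the degree-two component $V^{\otimes 2}\otimes K$, and the other from $\rho(A'RA')\subseteq(A'\otimes K)(R\otimes K)(A'\otimes K)\subseteq(R)\otimes K$ together with the fact that $R$ generates $(R)$.

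It then remains to identify $\rho(R)\subseteq R\otimes K$ with (a). Using multiplicativity of $\rho$ I would compute, for each $w$,
\[
\rho(r_w)=\sum_{i,j}c_w^{ij}\,\rho(x_i)\rho(x_j)
=\sum_{i,j,k,l}c_w^{ij}\,x_kx_l\otimes a_{ki}a_{lj},
\]
so the coefficient of $x_kx_l$ in $\rho(r_w)$ is $b_{kl}:=\sum_{i,j}c_w^{ij}a_{ki}a_{lj}\in K$. The crux is to detect when an element $\sum_{k,l}x_kx_l\otimes b_{kl}$ of $V^{\otimes 2}\otimes K$ lies in $R\otimes K$. By the defining duality $R=\{r\in V^{\otimes2}\mid\langle R^\perp,r\rangle=0\}$, the functionals $\langle r'_u,-\rangle$ for $u=1,\dots,n^2-m$ assemble into a surjection $\Phi\colon V^{\otimes2}\to k^{\,n^2-m}$ with kernel exactly $R$, yielding a short exact sequence $0\to R\to V^{\otimes2}\xrightarrow{\Phi}k^{\,n^2-m}\to0$. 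Since $K$ is a $k$-vector space, $-\otimes_k K$ is exact, so $\Phi\otimes\id_K$ has kernel exactly $R\otimes K$. As $\langle r'_u,x_kx_l\rangle=d_u^{kl}$, this says $\sum_{k,l}x_kx_l\otimes b_{kl}\in R\otimes K$ if and only if $\sum_{k,l}d_u^{kl}b_{kl}=0$ in $K$ for all $u$. Substituting $b_{kl}=\sum_{i,j}c_w^{ij}a_{ki}a_{lj}$ shows that $\rho(r_w)\in R\otimes K$ for all $w$ exactly when $\sum_{i,j,k,l}c_w^{ij}d_u^{kl}a_{ki}a_{lj}=0$ for all $w,u$, which is condition (a). Combined with the reduction above, this gives (a)$\Leftrightarrow$(b).

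I expect the main obstacle to be this last step, namely verifying that annihilation against $R^\perp$ still detects $R\otimes K$ once the coefficients $b_{kl}$ live in $K$ rather than in $k$. This is exactly the point at which flatness of $K$ over the field $k$ (equivalently, exactness of $-\otimes_k K$) is indispensable; the rest is a bookkeeping of indices and the routine grading argument. I would take care to state the duality pairing conventions explicitly so that the identities $\langle r'_u,x_kx_l\rangle=d_u^{kl}$ and $\langle x_i^*x_j^*,x_kx_l\rangle=\delta_{ik}\delta_{jl}$ are unambiguous.
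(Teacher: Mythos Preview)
Your argument is correct and complete: the reduction from the ideal $(R)$ to the degree-two piece $R$ via grading, the computation of $\rho(r_w)$, and the detection of membership in $R\otimes K$ via the pairing with $R^\perp$ (using exactness of $-\otimes_k K$) are all sound. Note, however, that the paper does not supply its own proof of this lemma; it is stated with a terminal \qed\ and attributed to \cite[Lemmas~5.5 and~5.6]{Manin:QGNCG}, so there is no in-paper argument to compare against---your write-up simply fills in what the paper takes for granted.
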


Let $I$ be the ideal of the bialgebra $F$, defined at the beginning
of this section, generated by elements $\sum_{i,j,k,l} c_w^{i,j}
d^{k,l}_u y_{ki}y_{lj}$ for all $w,u$. Now we can define the first
quantum group associated to $A$.

\begin{definition}
\label{defxx2.3}
\cite[Section 5.3]{Manin:QGNCG} The {\it quantum matrix
space associated to $A$} is defined to be ${\mathcal O}_A(M) =F/I$, which is
a bialgebra quotient of $F$.
\end{definition}

It follows that ${\mathcal O}_A(M)$
is non-trivial since \eqref{E2.1.1} gives $m(n^2-m)$ quadratic
relations between its generators $y_{i j}$. Thus,
${\mathcal O}_A(M)$ is noncommutative in general since for
$\mathcal{O}_A(M)$ to be commutative, we need $n^2(n^2-1)/2$
(quadratic, commutative) relations.
Moreover, $\mathcal{O}_A(M)$ is similar
to the quantum group $\underline{\eEnd}(A)$ defined in
\cite[Section 5.3]{Manin:QGNCG}, but
one sees that ${\mathcal O}_A(M)\cong (\underline{\eEnd}(A))^{coop}$
because $\rho$ corresponds to right coaction and left coactions
are discussed in \cite{Manin:QGNCG}. On the other hand, we will see
in the next proposition that ${\mathcal O}_A(M)$ coincides with Manin's $\uend(A^!)$.

Before we proceed, we introduce additional notation. Let
$A''$ be the free algebra generated by $(A_1)^*$. Define a coaction
of $F$ on $A''$ via
$\rho^!: A''\to F\otimes A''$, by
$$\rho^!(x^*_i)=\sum_{s=1}^n y_{is}\otimes x^*_s.$$
Namely, $A''$ is a left $F$-comodule algebra with the left
coaction determined by $\rho^!$. We provide more details on $\rho^!$
and its connection to the homological codeterminant as follows.

\begin{remark}
\label{remxx2.4}  Given a $K$-coaction
on an AS regular algebra $A$ with $K$ having a bijective antipode,
we can reinterpret the homological codeterminant via the coaction
$\rho^!$ on the Ext-algebra of $A$. Let $E$ denote the
$\Ext$-algebra $\bigoplus_{i\geq 0} \Ext^i_A(k,k)$. (If $A$ is
Koszul, then $E$ is isomorphic to $A^!$ \cite[Theorem
5.9.4]{Smith:elliptic}.) By Remark~\ref{remxx1.7}(d), $E$ is a left
$K$-comodule algebra and let $\rho^!$ be the left $K$-coaction on
$E$.  Note that $E$ is Frobenius \cite[Corollary D]{LPWZ:Koszul}.
Let $\fe$ be a nonzero element of  maximal degree of $E$. Then
$k\fe$ is a left $K$-comodule. Since $\Ext^1_A(k,k)$ can be
naturally identified with $(A_1)^*$, it is not hard to see that the
left $K$-coaction $\rho^!$ restricted on $\Ext^1_A(_Ak,_Ak)$ agrees
with the induced left $K$-coaction on $(A_1)^*$. Now by Definition
\ref{defxx1.6}(b), the homological codeterminant of the $K$-coaction
on $A$ is defined to be the element ${\sf D}\in K$ such that
$$\rho^!(\fe)={\sf D}\otimes \fe.$$
Note that ${\sf D}$ is called the {\it quantum determinant} by Manin
\cite[Section 8.2]{Manin:QGNCG}.
\end{remark}

Following \cite[Section 8.5]{Manin:QGNCG}, we have the following result.

\begin{proposition}
\label{proxx2.5} \cite[Theorem 5.10]{Manin:QGNCG} Let $K$ be a
bialgebra  (not necessarily $F$ above) coacting on the free algebra
$A' = k\langle x_1, \dots, x_n \rangle$ with $\rho(x_i)=\sum_{s=1}^n
x_s \otimes a_{si}$ for $a_{ij}\in K$. Then the following are
equivalent.
\begin{enumerate}
\item The coaction $\rho : A' \longrightarrow A' \otimes K$ satisfies $\rho \left( R
\right) \subseteq R \otimes K$, that is, 
$$\sum_{i,j,k,l} c_w^{ij} d^{kl}_u a_{ki}a_{lj}=0$$
for all $w,u$.
\item
The map $\rho$ induces naturally a right coaction of $K$ on $A$ such that
$A$ is a right $K$-comodule algebra.
\item
The map $\rho^!$ induces a natural left coaction of $K$ on $A^!$ such that
$A^!$ is a left $K$-comodule algebra.
\item
There is a bialgebra homomorphism $\phi: {\mathcal O}_A(M)\to K$
defined by $\phi(y_{ij})=a_{ij}$.
\end{enumerate}
\end{proposition}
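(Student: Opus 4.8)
The plan is to treat condition $(1)$ as the central hub, since it is a purely matrix-theoretic statement about the elements $a_{ij}\in K$, and to show that each of $(2)$, $(3)$, $(4)$ is equivalent to it. The implication $(1)\Leftrightarrow(2)$ is already supplied by Lemma~\ref{lemxx2.2}, so the real work is to prove $(1)\Leftrightarrow(4)$ and $(1)\Leftrightarrow(3)$ and then chain everything through $(1)$.

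First I would record that the comodule-algebra axioms for $\rho$ force $(a_{ij})$ to behave like a matrix of generators. Applying coassociativity and the counit axiom to $\rho(x_i)=\sum_s x_s\otimes a_{si}$ and comparing coefficients of the $x_t$ yields $\Delta(a_{ij})=\sum_s a_{is}\otimes a_{sj}$ and $\epsilon(a_{ij})=\delta_{ij}$, which are exactly the relations \eqref{E2.0.1} defining the bialgebra $F$. Since $F$ is free as an algebra, there is a bialgebra homomorphism $F\to K$ sending $y_{ij}\mapsto a_{ij}$. By Definition~\ref{defxx2.3} we have $\mathcal{O}_A(M)=F/I$, where $I$ is generated by the elements $\sum_{i,j,k,l}c_w^{ij}d_u^{kl}y_{ki}y_{lj}$; these map under $\phi$ to $\sum_{i,j,k,l}c_w^{ij}d_u^{kl}a_{ki}a_{lj}$, which is precisely the left-hand side of $(1)$. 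Hence the homomorphism descends to $\mathcal{O}_A(M)$ iff $(1)$ holds, giving $(1)\Leftrightarrow(4)$.

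For $(1)\Leftrightarrow(3)$ I would dualize the computation behind Lemma~\ref{lemxx2.2}. Extending $\rho^!$ to $(V^*)^{\otimes2}$ as a left-comodule-algebra map and evaluating on a basis element $r_u'=\sum_{i,j}d_u^{ij}x_i^*x_j^*$ of $R^\perp$ gives $\rho^!(r_u')=\sum_{k,l}\bigl(\sum_{i,j}d_u^{ij}a_{ik}a_{jl}\bigr)\otimes x_k^*x_l^*$. Using the perfect pairing between $(V^*)^{\otimes2}$ and $V^{\otimes2}$, the containment $\rho^!(r_u')\in K\otimes R^\perp$ for all $u$ holds iff each $K$-valued coefficient is annihilated by every $r_w=\sum_{k,l}c_w^{kl}x_kx_l$; after relabeling the dummy indices this is exactly the system $\sum_{i,j,k,l}c_w^{ij}d_u^{kl}a_{ki}a_{lj}=0$ of $(1)$. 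Thus $\rho^!$ preserves $R^\perp$ under the same matrix relations that make $\rho$ preserve $R$, and $A^!=k\langle V^*\rangle/(R^\perp)$ is a left $K$-comodule algebra exactly when $(1)$ holds.

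The routine part is index bookkeeping; the one point demanding care is keeping the left/right conventions straight, as flagged in Remark~\ref{remxx2.1}: $\rho$ is a right coaction while $\rho^!$ is a left coaction, so the comatrix relation enters transposed and the orthogonality argument must be set up on the correct tensor factor of $K\otimes A^!$. I expect the genuine content to reside entirely in the duality identity equating preservation of $R$ with preservation of $R^\perp$; once that symmetry is made explicit, all four statements collapse to condition $(1)$. Finally, I would note that the well-definedness of the induced algebra coactions on the quotients $A$ and $A^!$ is exactly what $(2)$ and $(3)$ assert, so beyond checking relation-preservation no further verification is required.
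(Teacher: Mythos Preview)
Your proposal is correct and follows essentially the same architecture as the paper's proof: the paper also routes everything through condition (a), citing Lemma~\ref{lemxx2.2} for (a)$\Leftrightarrow$(b), invoking ``a version of Lemma~\ref{lemxx2.2} for $(A^!,\rho^!)$'' for (a)$\Leftrightarrow$(c), and appealing to the definition of $\mathcal{O}_A(M)$ for (a)$\Leftrightarrow$(d). Your write-up simply unpacks these terse references, making explicit the bialgebra map $F\to K$ and the duality computation showing that $\rho^!(R^\perp)\subseteq K\otimes R^\perp$ is governed by the same matrix identities as $\rho(R)\subseteq R\otimes K$; the index relabeling you perform is exactly the content the paper leaves implicit.
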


\begin{proof} (a) $\Leftrightarrow$ (b) This is Lemma
\ref{lemxx2.2}.

(a) $\Leftrightarrow$ (c) This follows from a version of Lemma
\ref{lemxx2.2} for $(A^!, \rho^!)$.

(a) $\Leftrightarrow$ (d) This follows from the definition of
${\mathcal O}_A(M)$.
\end{proof}

For any  bialgebra $B$, Manin defines the {\it Hopf envelope}
of $B$ in \cite[Chapter~7]{Manin:QGNCG}. This is a Hopf algebra $H(B)$ with a bialgebra map $\phi: B \to H(B)$, so that for any Hopf algebra $H'$, a bialgebra map $\psi: B \to H'$ extends uniquely to a Hopf algebra map $\hat{\psi}: H(B) \to H'$ with $\psi = \hat{\psi} \circ \phi$ as bialgebra maps. Such a structure does not necessarily exist, but it does in our setting, and we use this object to
introduce the second quantum group associated to the (quadratic) algebra $A$.

\begin{definition}\cite[page 47]{Manin:QGNCG}
\label{defxx2.6} The {\it quantum general linear group} associated
to $A$ is defined to be the Hopf envelope of ${\mathcal O}_A(M)$,
and is denoted by ${\mathcal O}_{A}(GL)$. Abusing notation, we use
$y_{ij}$ to denote the image of the generators $y_{ij} $ of
$\mathcal{O}_A(M)$ in ${\mathcal O}_{A}(GL)$.
\end{definition}

We see that $\mc{O}_A(GL)$ serves as the universal Hopf algebra that
coacts on $A$ inner-faithfully.

\begin{lemma}
\label{lemxx2.7} Let $A$ be a quadratic algebra generated by
$A_1=\oplus_{i=1}^n kx_i$ and $K$ be a Hopf algebra coacting on $A$
from the right. Write $\rho_K(x_i)=\sum_{s=1}^n x_s\otimes a_{si}$
for some $a_{si}\in K$. Then the $K$-coaction is inner-faithful if and
only if,  for all $1\leq i,j\leq n$, the map $\phi: y_{ij}\to a_{ij}$
induces a surjective Hopf algebra homomorphism from ${\mathcal
O}_{A}(GL)$ to $K$.
\end{lemma}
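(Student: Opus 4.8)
The plan is to exhibit $\phi$ as a Hopf algebra map whose image is exactly the Hopf subalgebra of $K$ generated by the $a_{ij}$, and then to match ``the image is all of $K$'' with ``the coaction is inner-faithful'' directly through the definition of inner-faithfulness for comodules. First I would construct $\phi$. Since $K$ coacts on $A$ making it a right $K$-comodule algebra with $\rho(x_i)=\sum_{s} x_s\otimes a_{si}$, condition (b) of Proposition~\ref{proxx2.5} holds, so condition (d) supplies a bialgebra homomorphism $\mathcal{O}_A(M)\to K$ sending $y_{ij}\mapsto a_{ij}$. Because $K$ is a Hopf algebra, the universal property of the Hopf envelope $\mathcal{O}_A(GL)=H(\mathcal{O}_A(M))$ (Definition~\ref{defxx2.6}) extends this uniquely to a Hopf algebra homomorphism $\phi\colon \mathcal{O}_A(GL)\to K$ with $\phi(y_{ij})=a_{ij}$, which is the map in the statement. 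It then remains to prove that $\phi$ is surjective if and only if the $K$-coaction on $A$ is inner-faithful.

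Next I would identify the image. Write $K'':=\im(\phi)$, which is a Hopf subalgebra of $K$ since $\phi$ is a Hopf algebra map. The key structural input is that, by Manin's construction, the Hopf envelope $\mathcal{O}_A(GL)$ is generated as an algebra by the iterated antipode images $\{S^{p}(y_{ij}) : p\ge 0,\ 1\le i,j\le n\}$; in particular it is generated as a Hopf algebra by $\{y_{ij}\}$. Applying $\phi$, which intertwines the antipodes, shows that $K''$ is the smallest Hopf subalgebra of $K$ containing all the $a_{ij}$, i.e. the Hopf subalgebra they generate. I expect this to be the main obstacle: one must invoke the precise form of the Hopf envelope and use that a Hopf subalgebra is closed under the antipode, so that ``$K'$ contains every $a_{ij}$'' upgrades to ``$K'$ contains every $S^{p}(a_{ij})$'' and hence to ``$K''\subseteq K'$''.

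The remaining step translates the coaction into a containment of Hopf subalgebras. For any Hopf subalgebra $K'\subseteq K$ I would establish
\[
\rho(A)\subseteq A\otimes K' \iff a_{ij}\in K' \text{ for all } i,j \iff K''\subseteq K'.
\]
For the first equivalence, since $\rho$ is an algebra map and $A$ is generated by $A_1=\bigoplus_i kx_i$ with $\rho(A_1)\subseteq A_1\otimes K$, one has $\rho(A)\subseteq A\otimes K'$ as soon as $\rho(A_1)\subseteq A_1\otimes K'$, and conversely intersecting $A\otimes K'$ with $A_1\otimes K$ forces $\rho(A_1)\subseteq A_1\otimes K'$, i.e. every $a_{ij}\in K'$; the second equivalence is the content of the previous paragraph.

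Finally, by Definition~\ref{defxx1.2} the coaction is inner-faithful precisely when no proper Hopf subalgebra $K'\subsetneq K$ satisfies $\rho(A)\subseteq A\otimes K'$. By the displayed equivalences such a proper $K'$ exists if and only if $K''\subsetneq K$ (take $K'=K''$ in one direction, and use $K''\subseteq K'\subsetneq K$ in the other). Hence the coaction is inner-faithful if and only if $K''=K$, that is, if and only if $\phi$ is surjective, which completes the argument. Alternatively, the equivalence ``$K''=K$ iff inner-faithful'' can be read off from Lemma~\ref{lemxx1.3}(b) applied to the subcomodule $A_1$, once one checks that the subalgebra generated there by $\{a_{si},S^{p}(a_{si})\}$ coincides with $K''$.
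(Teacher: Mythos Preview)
Your proposal is correct and follows essentially the same approach as the paper: both construct $\phi$ by first invoking Proposition~\ref{proxx2.5} to get a bialgebra map $\mathcal{O}_A(M)\to K$ and then extending via the universal property of the Hopf envelope, and both finish by identifying the image of $\phi$ as the minimal Hopf subalgebra through which the coaction factors and comparing this with the definition of inner-faithfulness. Your write-up is somewhat more explicit (the chain of equivalences and the reduction to $A_1$), but the underlying argument is the same as the paper's.
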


\begin{proof} By Proposition \ref{proxx2.5}, there is a bialgebra
homomorphism $\phi:{\mathcal O}_A(M)\to K$ defined by
$\phi(y_{ij})=a_{ij}$ for all $i,j$. Since $K$ is a Hopf algebra, by
\cite[Theorem~7.3(c)]{Manin:QGNCG}, this map extends uniquely to a Hopf algebra
homomorphism $\phi: \mathcal{O}_{A}(GL)\to K$ where $\phi(y_{ij})=a_{ij}$
for all $i,j$. Let $K'$ be the image of $\phi$, which is a Hopf
subalgebra of $K$ that coacts on $A$. If $K$-coaction on $A$ is
inner-faithful, then $K'=K$, namely,
the map $\phi$ is surjective. Conversely, if $K$-coaction is not
inner-faithful, there is a Hopf subalgebra $K'\subsetneq K$ such that
$\rho(A)\subset A\otimes K'$. By definition, each $a_{ij}$ is in
$K'$. By \cite[Theorem 7.3(c)]{Manin:QGNCG}, $\phi$ maps ${\mathcal
O}_{A}(GL)$ to $K'$. Here,  $\phi$ is not surjective.
\end{proof}

\subsection{Hopf actions on non-quadratic algebras}

In order to describe Hopf actions on $N$-Koszul algebras for $N\geq3$,
we have to
consider connected graded algebras that are not quadratic. The
definition of an $N$-Koszul algebra will be reviewed in Section
\ref{secxx4}. For the rest of this section, we consider general
connected graded algebras which are not necessarily quadratic and
collect some basic facts without proofs. Let $A$ be connected graded
algebra generated by a finite dimensional graded vector space $W$
and $K$ be a Hopf algebra coacting on $A$ via
$\delta: A \rightarrow A \otimes K$ from the
right such that:
\begin{enumerate}
\item[(H1)]
each homogeneous component $A_i$ is a right $K$-comodule;
\item[(H2)]
$A$ is a right $K$-comodule algebra; and
\item[(H3)]
$K$-coaction on $W$ is inner-faithful.
\end{enumerate}

\noindent The following is the first quantum group to which we
associate to $A$.

\begin{definitionlemma}
\label{defxx2.8} \cite[Section 7.5]{Manin:QGNCG} There is
a {\it quantum general linear group}, denoted by ${\mathcal
O}_A(GL)$ (see Definition \ref{defxx2.6} when $A$ is quadratic),
coacting on $A$, via
$$\rho: A\to A\otimes {\mathcal O}_A(GL).$$
This coaction has the universal property: for every $K$-coaction
$\delta$ on  $A$, there is a unique Hopf algebra morphism $\gamma:
{\mathcal O}_A(GL)\to K$ such that $\delta=(\text{Id} \otimes \gamma)\circ \rho$. \qed
\end{definitionlemma}

Further, $\gamma$ is surjective since the $K$-coaction is inner-faithful
(see Lemma \ref{lemxx2.7}). Now to introduce the last two quantum groups
associated to $A$ in this section, suppose that $A$ is AS regular.

\begin{definition}
\label{defxx2.9} Let $A$ be an AS regular algebra. Let ${\sf D}$ be
the homological codeterminant of the ${\mathcal O}_A(GL)$-coaction
on $A$.
\begin{enumerate}
\item \cite[Section 8.5]{Manin:QGNCG}
The {\it quantum special linear group} associated to $A$, denoted by
${\mathcal O}_{A}(SL)$, is defined to be the Hopf algebra quotient
${\mathcal O}_{A}(GL)/({\sf D}-1_K)$.
\item
Let $I$ be the Hopf ideal generated by $\{S^2(a)-a\mid {\text{for
all $a$}}\in {\mathcal O}_{A}(GL)\}$. The {\it quantum $S^2$-trivial
linear group} associated to $A$, denoted by ${\mathcal
O}_{A}(GL/S^2)$, is defined to be the Hopf algebra quotient
${\mathcal O}_{A}(GL)/I$.
\end{enumerate}
\end{definition}

Both ${\mathcal O}_{A}(SL)$ and ${\mathcal O}_{A}(GL/S^2)$ are
Hopf algebras coacting on $A$ from the right such that $A$ is a
comodule algebra. The following lemma follows from the universal
property of ${\mathcal O}_{A}(GL)$.

\begin{lemma}
\label{lemxx2.10} Retain the setting of Definition \ref{defxx2.9}
and assume ${\text{(H1)-(H3)}}$.
\begin{enumerate}
\item
If the $K$-coaction has trivial homological codeterminant, then
the quotient morphism $\gamma: {\mathcal O}_A(GL)\to K$ factors
through ${\mathcal O}_A(SL)$.
\item
If $S^2=Id_K$, then the quotient morphism $\gamma:
{\mathcal O}_A(GL)\to K$ factors through ${\mathcal O}_A(GL/S^2)$. \qed
\end{enumerate}
\end{lemma}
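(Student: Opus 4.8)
The plan is to prove both parts by the same mechanism: show that the ideal defining the target quotient is contained in $\ker\gamma$, and then read off the factorization from the universal property of quotient Hopf algebras. Recall from Definition-Lemma~\ref{defxx2.8} that $\gamma\colon {\mathcal O}_A(GL)\to K$ is the unique Hopf algebra morphism with $\delta=(\text{Id}\otimes\gamma)\circ\rho$, and that it is surjective because the $K$-coaction is inner-faithful by (H3). In particular $\gamma$ is a Hopf algebra morphism, so $\ker\gamma$ is a Hopf ideal; being in particular a two-sided ideal, it contains the ideal generated by any of its elements. Thus in each case it suffices to show that the stated generators lie in $\ker\gamma$.

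For part (a), the key step I would carry out is to prove the naturality of the homological codeterminant along $\gamma$, namely that $\gamma({\sf D})=\hcodet_K A$, where ${\sf D}=\hcodet_{{\mathcal O}_A(GL)}A$. The idea is that the two left coactions on the $\Ext$-algebra $E=\bigoplus_{i\ge 0}\Ext^i_A(k,k)$ are compatible with $\gamma$: the left $K$-coaction on $E$ is the pushforward $(\gamma\otimes\text{Id})\circ\rho^!$ of the left ${\mathcal O}_A(GL)$-coaction $\rho^!$ from Remark~\ref{remxx2.4}. Applying $\gamma\otimes\text{Id}$ to the defining relation $\rho^!(\fe)={\sf D}\otimes\fe$ on a top-degree element $\fe$ then gives $\gamma({\sf D})\otimes\fe$, which by Definition~\ref{defxx1.6}(b) must equal $(\hcodet_K A)\otimes\fe$; since $k\fe$ is one-dimensional this forces $\gamma({\sf D})=\hcodet_K A$. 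When the $K$-coaction has trivial homological codeterminant, $\hcodet_K A=1_K$, so $\gamma({\sf D}-1)=0$ and ${\sf D}-1\in\ker\gamma$. As ${\mathcal O}_A(SL)={\mathcal O}_A(GL)/({\sf D}-1)$, the map $\gamma$ then factors through ${\mathcal O}_A(SL)$.

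Part (b) is more direct. Here I would use that $\gamma$, being a Hopf algebra morphism, commutes with the antipode, hence with its square: $\gamma\circ S^2=S^2\circ\gamma$. Assuming $S^2=\text{Id}_K$, for every $a\in{\mathcal O}_A(GL)$ we obtain $\gamma(S^2(a)-a)=S^2(\gamma(a))-\gamma(a)=\gamma(a)-\gamma(a)=0$, so every generator $S^2(a)-a$ of the defining Hopf ideal $I$ lies in $\ker\gamma$. Therefore $I\subseteq\ker\gamma$, and $\gamma$ factors through ${\mathcal O}_A(GL/S^2)={\mathcal O}_A(GL)/I$.

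The step I expect to be the main obstacle is the naturality claim $\gamma({\sf D})=\hcodet_K A$ in part (a) --- more precisely, the assertion that the $K$-coaction on $E$ is the pushforward along $\gamma$ of the ${\mathcal O}_A(GL)$-coaction. This is not purely formal, since the comodule structure on each $\Ext^i_A(k,k)$ arises through the graded Hopf module and injective resolution argument recalled in Remark~\ref{remxx1.7}(d) rather than from an explicit formula. To pin it down I would verify that this construction is functorial in the coacting Hopf algebra: the relation $\delta=(\text{Id}\otimes\gamma)\circ\rho$ at the level of $A$ propagates through the resolutions computing $\Ext^*_A(k,k)$, using only that $\gamma$ is a Hopf morphism and that the top-degree line $k\fe$ is canonically determined. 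Once this functoriality is established, the remaining facts --- that $({\sf D}-1)$ is a Hopf ideal because ${\sf D}$ is grouplike, that $I$ is a Hopf ideal by construction, and that a surjection whose kernel contains the defining ideal descends to the quotient --- are routine.
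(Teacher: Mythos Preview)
Your proposal is correct and follows the same route the paper intends: the paper gives no argument beyond the sentence ``follows from the universal property of ${\mathcal O}_A(GL)$'' and places a \qed\ in the statement, so your elaboration---showing that the defining ideals $({\sf D}-1)$ and $I$ lie in $\ker\gamma$ via naturality of $\hcodet$ and the intertwining $\gamma\circ S^2=S^2\circ\gamma$---is exactly the unpacking the paper leaves to the reader. Your identification of the functoriality of the $\Ext$-algebra coaction along $\gamma$ as the one nontrivial point in part~(a) is accurate and appropriate.
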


\section{Hopf actions on Frobenius algebras}
\label{secxx3}

Let $E$ be a connected graded Frobenius algebra. The goal of this section is to study Hopf actions on $E$. We first determine the Nakayama
automorphism $\mu_E$ in terms of a basis of $E_1$; see
Equation \eqref{E3.0.4} below.

Let $l$ be the highest degree of any nonzero element in $E$, and let
${\mathfrak e}$ be a nonzero element in $E_l$. Since $E$ is
Frobenius, the multiplication of $E$ defines a non-degenerate
bilinear form
$$ E_{i}\times E_{l-i}\to k{\mathfrak e}\cong k(-l)$$
for each $0\leq i\leq l$ \cite[Lemma 3.2]{Smith:elliptic}. Pick any
basis $\{a_1,\cdots,a_n\}$ of $E_1$.  There is a unique basis of
$E_{l-1}$, say $\{b_1,\cdots,b_n\}$, such that
\begin{equation} \label{E3.0.1} \tag{E3.0.1}
a_i b_j=\delta_{ij} {\mathfrak e}
\end{equation}
for all $1\leq i,j \leq n$.  Moreover, there is a unique basis of $E_1$,
say $\{c_1,\cdots, c_n\}$, such that
\begin{equation} \label{E3.0.2} \tag{E3.0.2}
b_i c_j=\delta_{ij} {\mathfrak e}
\end{equation}
for all $1\leq i,j \leq n$. Therefore, there is non-singular matrix
$\alpha=(\alpha_{ij})_{i,j=1}^n$ such that
\begin{equation}
\label{E3.0.3}\tag{E3.0.3} c_i=\sum_{j=1}^n \alpha_{ij} a_j
\end{equation}
for all $i$.  By the definition of Nakayama automorphism, $\mu_E$
maps $a_i$ to $c_i$, or in other words
\begin{equation}
\label{E3.0.4}\tag{E3.0.4} \mu_E(a_i)=\sum_{j=1}^{n} \alpha_{ij}a_j
\end{equation}
for all $i$. Throughout this work, we refer to $\alpha = (\alpha_{ij})$ as the matrix associated to $\mu_E$.

Now let $K$ be a Hopf algebra coacting on $E$ from the left such that
$E$ is a left $K$-comodule algebra. The $K$-coaction
(via comodule structure map  $\rho$) preserves the grading of $E$. By \cite[Theorem
5.2.2]{Manin:QGNCG}, there is a canonical map $\gamma:{\mathcal O}_E(GL)\to
K$ (see Lemma \ref{lemxx2.7} when $E$ is quadratic). So if
$\{y_{ij}\}_{1\leq i,j\leq n}$ is a set of elements in $K$ such that
\begin{equation}\label{E3.0.5}
\tag{E3.0.5} \rho(a_i)=\sum_{s=1}^n y_{is}\otimes a_s
\end{equation}
for all $i$, then $\Delta(y_{ij}) = \sum_{s=1}^n y_{is} \otimes
y_{sj}$ and $\epsilon(y_{ij}) = \delta_{ij}$. 

For simplicity, denote the matrix $(y_{ij})_{n\times
n}$ by ${\mathbb Y}$. For any matrix ${\mathbb W}=(w_{ij})_{n\times n}$
in $M_n(K)$, write $S({\mathbb W})=(S(w_{ij}))_{n\times n}$.
Choose $b_i$ and $c_i$ as in (\ref{E3.0.1}, \ref{E3.0.2}) and write
$$\rho(b_i)=\sum_{s=1}^n f_{is}\otimes b_s,$$
$$\rho(c_i)=\sum_{s=1}^n g_{is}\otimes c_s$$
for elements $f_{is},g_{is}$ in $K$. Note that each $g_{ij}$ is in
the space $\sum_{s,t} ky_{st}$. If $E$ is generated in degree 1,
then each $f_{ij}$ is contained in the subalgebra generated by $\left\{ y_{s t} \right\}_{s, t}$. Similarly,
let ${\mathbb F}=(f_{ij})_{n\times n}$ and
${\mathbb G}=(g_{ij})_{n\times n}$ be the corresponding matrices.

\begin{lemma}
\label{lemxx3.1} Retain the notation above and let $\mathbb{I}$ be
the $n\times n$ identity matrix. Let ${\sf D}$ be the homological
codeterminant of the left $K$-coaction $\rho$ on $E$. (This is
applicable to $E$ in Definition \ref{defxx1.6}(b) for instance.) We
have the statements below.
\begin{enumerate}
\item
${\mathbb Y} S({\mathbb Y})={\mathbb I}=S({\mathbb Y}){\mathbb Y}.$
\item
${\mathbb G} S({\mathbb G})={\mathbb I}=S({\mathbb G}){\mathbb G}.$
\item
${\mathbb Y}{\mathbb F}^\tau={\sf D} \mathbb{I}$. As a consequence, $S({\mathbb Y})
={\mathbb F}^\tau ({\sf D}^{-1} \mathbb{I})$.
\item
$S({\mathbb F})S({\mathbb Y}^\tau)={\sf D}^{-1} \mathbb{I}$.
\item
$S({\mathbb G}) S({\mathbb F}^\tau)={\sf D}^{-1} \mathbb{I}$. As a consequence,
$S({\mathbb F}^\tau) \cdot {\sf D}\mathbb{I}={\mathbb G}$.
\item
$S^2({\mathbb Y})={\sf D}\mathbb{I} \cdot  {\mathbb G} \cdot  {\sf D}^{-1}\mathbb{I}$.
\item
${\mathbb G}=\alpha {\mathbb Y} \alpha^{-1}$ where
$\alpha=(\alpha_{ij})$.
\end{enumerate}
\end{lemma}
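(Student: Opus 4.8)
The plan is to obtain statement (g) directly from the relation \eqref{E3.0.3} by applying the comodule structure map $\rho$ and comparing coefficients against the basis $\{a_s\}$ of $E_1$, exploiting the $k$-linearity of $\rho$ so that the scalar matrix $\alpha$ passes through the $K$-tensor factor unchanged. Nothing from parts (a)--(f) is needed; only the comodule axiom and the linear independence of the $a_s$ enter.

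First I would apply $\rho$ to $c_i=\sum_j \alpha_{ij} a_j$. Using linearity and \eqref{E3.0.5}, the right-hand side becomes $\sum_{j,s}\alpha_{ij}y_{js}\otimes a_s$, whose coefficient of $\otimes\, a_s$ is the $(i,s)$-entry of $\alpha\mathbb{Y}$. On the other hand, $\rho(c_i)=\sum_s g_{is}\otimes c_s$ by definition; re-expanding each $c_s$ via \eqref{E3.0.3} as $c_s=\sum_t\alpha_{st}a_t$ rewrites this as $\sum_{s,t}g_{is}\alpha_{st}\otimes a_t$, whose coefficient of $\otimes\, a_t$ is the $(i,t)$-entry of $\mathbb{G}\alpha$.

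Since $\{a_s\}_{s=1}^n$ is a $k$-basis of $E_1$ and the two displays are equal elements of $K\otimes E_1$, I would equate the coefficient of each $\otimes\, a_s$ on the two sides, obtaining the matrix identity $\alpha\mathbb{Y}=\mathbb{G}\alpha$. As $\alpha$ is non-singular (by the discussion preceding \eqref{E3.0.3}), this is equivalent to $\mathbb{G}=\alpha\mathbb{Y}\alpha^{-1}$, which is the assertion of (g).

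I do not expect a serious obstacle here: the argument is a one-step application of the comodule property together with linear independence of the $a_s$. The only point requiring care is the bookkeeping of row-versus-column conventions, that is, making sure that the convention ``row $i$ of $\mathbb{Y}$ records $\rho(a_i)$'' is matched against the convention under which $\alpha$ sends the $a_i$ to the $c_i$, so that the two coefficient comparisons assemble into $\alpha\mathbb{Y}$ and $\mathbb{G}\alpha$ respectively rather than into their transposes. (It is also implicitly used, as already noted in the text, that each $g_{ij}$ lies in $\sum_{s,t} k\,y_{st}$, so that both sides genuinely live in the same space before the comparison is made.)
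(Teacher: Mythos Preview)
Your proof of part (g) is correct and matches the paper's own approach exactly: the paper simply says ``This follows from applying $\rho$ to the equations $c_i=\sum_{j} \alpha_{ij} a_j$ and linear algebra,'' and you have carried out precisely that computation in detail. The one minor remark is that your final parenthetical about $g_{ij}\in\sum_{s,t} k\,y_{st}$ is unnecessary for the argument, since both expressions for $\rho(c_i)$ already live in $K\otimes E_1$ regardless of where the $g_{ij}$ sit inside $K$.
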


\begin{proof} (a) By definition $\rho(a_i)=\sum_{s=1}^n y_{is}
\otimes a_s$ for all $i$. By the coassociativity of $\rho$, we have
$$\Delta(y_{ij})=\sum_{s=1}^n y_{is}\otimes y_{sj} \quad {\text{and}}\quad
\epsilon(y_{ij})=\delta_{ij}$$ for all $i,j$.  The
assertion follows from the antipode axiom.
\smallskip

\noindent (b) This is similar to (a).
\smallskip

\noindent (c) Applying $\rho$ to the equation $\delta_{ij} {\mathfrak e}= a_i
b_j$, we have by Definition \ref{defxx1.6}(b) that
$$
\delta_{ij} {\sf D}\otimes {\mathfrak e}=\left (\sum_{s} y_{is}\otimes
a_s \right) \left(\sum_t f_{jt}\otimes b_t\right)=\sum_{s} y_{is}f_{js}\otimes
{\mathfrak e}.
$$
This implies that ${\mathbb Y}{\mathbb F}^\tau={\sf D} \mathbb{I}$. Since ${\sf D}$ is
invertible in $K$, we have that ${\mathbb Y}({\mathbb F}^\tau {\sf D}^{-1}
\mathbb{I})=\mathbb{I}$. So ${\mathbb F}^\tau {\sf D}^{-1} \mathbb{I}$ is a right inverse of
${\mathbb Y}$. By part (a), $S({\mathbb Y})$ is an (left and right) inverse of
${\mathbb Y}$. Hence, $S({\mathbb Y})={\mathbb F}^\tau {\sf D}^{-1} \mathbb{I}$.

\smallskip

\noindent (d) This follows by applying $S$ to ${\mathbb Y}{\mathbb
F}^\tau={\sf D} \mathbb{I}$ from part (c), and using the fact $S$ is
an anti-endomorphism of $K$. Note that ${\sf D}$ is necessarily
grouplike, so $S({\sf D})={\sf D}^{-1}$. 

\smallskip

\noindent (e) Applying $\rho$ to the equation $\delta_{ij} {\mathfrak e}= b_i
c_j$, we have that
$$
\delta_{ij} D\otimes {\mathfrak e}=\left(\sum_{s} f_{is}\otimes b_s\right)\left(\sum_t
g_{jt}\otimes c_t\right)=\sum_{s} f_{is}g_{js}\otimes {\mathfrak e},
$$
which implies that ${\mathbb F}{\mathbb G}^\tau={\sf D} \mathbb{I}$.
Applying $S$ we obtain that $S({\mathbb G}) S({\mathbb F}^\tau)={\sf
D}^{-1} \mathbb{I}$. 
The last assertion follows by the fact
$S({\mathbb G})$ has the inverse ${\mathbb G}$ by part (b).

\smallskip

\noindent (f) The assertion follows from parts (c,e).

\smallskip

\noindent (g) This follows from applying $\rho$ to the equations
$c_i=\sum_{j=1}^m \alpha_{ij} a_j$ and linear algebra.
\end{proof}

We now define an algebra endomorphism $\eta_{\mu_E}$ of $K$ dependent on the
Nakayama automorphism $\mu_E$ of the Frobenius algebra $E$. First choose a
basis $\left\{ a_i \right\}_{i = 1}^n$ of $E_1$ and let $\alpha \in M_n \left(
k \right)$ be the matrix of $\mu_E \mid_{E_1}$ with respect to this basis
(c.f. \eqref{E3.0.4}). Let $\mathbb{Y}= \left( y_{i j} \right)_{n\times n}$ where $\rho \left( a_i
\right) = \sum_{s = 1}^n y_{i s} \otimes a_s$ and define
\begin{equation}
\label{E3.1.1}\tag{E3.1.1}
  (\eta_{\mu_E} \left( y_{ij} \right))_{n\times n} =\eta_{\mu_E} \left( \mathbb{Y} \right) = \alpha \mathbb{Y} \alpha^{-
  1} .
\end{equation}
If the $K$-coaction on $E_1$ is inner-faithful, then the entries of
$\mathbb{Y}$ generate $K$ as a Hopf algebra. In this case, we show below that
\eqref{E3.1.1} extends to an algebra endomorphism of $K$ which is independent of the
choice of basis $\left\{ a_i \right\}_{i = 1}^n$ of $E_1$.

\begin{theorem}
\label{thmxx3.2} Assume that
\begin{enumerate}
\item[(i)]
$E$ is connected graded and Frobenius with Nakayama automorphism
$\mu_E$;
\item[(ii)]
$K$ is a Hopf algebra with antipode $S$ coacting on $E$ such that
the left $K$-coaction on $E_1$ is inner-faithful; and
\item[(iii)]
${\sf D} \in K$ is the homological codeterminant of the left $K$-coaction on $E$.
\end{enumerate}
Then \eqref{E3.1.1} determines a unique Hopf algebra endomorphism
of $K$, still denoted by $\eta_{\mu_E}$, and
\begin{equation}
\label{E3.2.1}\tag{E3.2.1}
\eta_{\mu_E}(y_{ij}) = {\sf D}^{-1}S^2(y_{ij}){\sf D}
\end{equation}
for all $i,j$. As a consequence, the antipode $S$ of $K$ is surjective.
\end{theorem}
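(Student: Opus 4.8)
The plan is to avoid verifying directly that the assignment \eqref{E3.1.1} respects the relations among the $y_{ij}$, and instead to produce an \emph{intrinsic} Hopf algebra endomorphism of $K$ that happens to agree with \eqref{E3.1.1} on the generators. Define $\eta_{\sf D}\colon K\to K$ by $\eta_{\sf D}(a)={\sf D}^{-1}a{\sf D}$. Since ${\sf D}$ is grouplike and invertible, $\eta_{\sf D}$ is an algebra automorphism that is also a coalgebra map (from $\Delta({\sf D})={\sf D}\otimes{\sf D}$ and $\epsilon({\sf D})=1$), hence a Hopf algebra automorphism. As $S^2$ is a Hopf algebra endomorphism of $K$ (the two ``anti'' structures of $S$ cancel), the composite $\Phi:=\eta_{\sf D}\circ S^2$, that is $\Phi(a)={\sf D}^{-1}S^2(a){\sf D}$, is again a Hopf algebra endomorphism of $K$.

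Next I would identify $\Phi$ on the generators with the matrix conjugation in \eqref{E3.1.1}. Reading Lemma \ref{lemxx3.1}(f) entrywise gives $S^2(y_{ij})={\sf D}\,g_{ij}\,{\sf D}^{-1}$, so that ${\sf D}^{-1}S^2(y_{ij}){\sf D}=g_{ij}$, i.e.\ $\Phi(\mathbb{Y})=\mathbb{G}$. Combining this with Lemma \ref{lemxx3.1}(g), which gives $\mathbb{G}=\alpha\mathbb{Y}\alpha^{-1}$, yields $\Phi(\mathbb{Y})=\alpha\mathbb{Y}\alpha^{-1}$. Thus $\Phi$ satisfies the defining relation \eqref{E3.1.1}, and by its very construction it satisfies the formula \eqref{E3.2.1}. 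Setting $\eta_{\mu_E}:=\Phi$ then establishes existence of a Hopf algebra endomorphism satisfying \eqref{E3.1.1} together with the asserted formula.

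For uniqueness, suppose $\psi$ is any Hopf algebra endomorphism of $K$ with $\psi(\mathbb{Y})=\alpha\mathbb{Y}\alpha^{-1}=\Phi(\mathbb{Y})$. Then $\psi$ and $\Phi$ agree on every $y_{ij}$; being Hopf algebra maps they commute with $S$, hence agree on every $S^n(y_{ij})$. By inner-faithfulness of the $K$-coaction on $E_1$, the elements $\{y_{ij}\}$ generate $K$ as a Hopf algebra (see the discussion preceding the theorem, cf.\ Lemma \ref{lemxx1.3}(b)): equivalently, the subalgebra generated by $\{y_{ij},S^n(y_{ij})\}$ is all of $K$. Therefore $\psi=\Phi$, proving uniqueness.

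Finally, to deduce surjectivity of $S$, I would first show $\eta_{\mu_E}$ is surjective. Since $\alpha$ is invertible, from $\eta_{\mu_E}(\mathbb{Y})=\alpha\mathbb{Y}\alpha^{-1}$ one recovers each generator by linearity as $y_{pq}=\eta_{\mu_E}\!\bigl(\sum_{i,j}(\alpha^{-1})_{pi}\,y_{ij}\,\alpha_{jq}\bigr)$, so $\im\eta_{\mu_E}$ contains every $y_{pq}$; being the image of a Hopf algebra endomorphism it is a Hopf subalgebra of $K$, and it contains the Hopf generators, whence $\im\eta_{\mu_E}=K$. Because $\eta_{\sf D}$ is bijective and $\eta_{\mu_E}=\eta_{\sf D}\circ S^2$, it follows that $S^2=\eta_{\sf D}^{-1}\circ\eta_{\mu_E}$ is surjective, and therefore $S$ is surjective. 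The one genuinely delicate point throughout is the well-definedness of \eqref{E3.1.1} as an algebra map; the device of matching it against the manifestly well-defined $\eta_{\sf D}\circ S^2$ is exactly what circumvents checking the relations by hand, and is the main obstacle the argument is designed to sidestep.
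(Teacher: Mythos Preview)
Your proposal is correct and follows essentially the same approach as the paper: define the intrinsic Hopf endomorphism $\eta_{\sf D}\circ S^2$, use Lemma~\ref{lemxx3.1}(f,g) to match it on generators with the matrix conjugation in \eqref{E3.1.1}, invoke inner-faithfulness for uniqueness, and deduce surjectivity of $S$ from bijectivity of $\eta_{\mu_E}$ on the span of the $y_{ij}$ together with bijectivity of $\eta_{\sf D}$. Your write-up is a bit more explicit than the paper's (e.g.\ spelling out why $\eta_{\sf D}$ is a Hopf automorphism and why the image of $\eta_{\mu_E}$ is a Hopf subalgebra), but the argument is the same.
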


\begin{proof}
By Lemma \ref{lemxx3.1}(f,g),
$${\sf D}^{-1}S^2({\mathbb Y}) {\sf D}={\mathbb G}=\alpha {\mathbb Y} \alpha^{-1}=
\eta_{\mu_E}({\mathbb Y})$$
which is \eqref{E3.2.1}. Let $\eta_{\sf D}$ be the conjugation by ${\sf D}$.
Then $\eta_{\sf D}\circ S^2$ is a Hopf algebra endomorphism of $K$. Equation
\eqref{E3.1.1} says that $\eta_{\mu_E}=\eta_{\sf D}\circ S^2$ when applied
to $y_{ij}$. Since $\{y_{ij}\}$ generates $K$ as a Hopf algebra,
there is a unique Hopf algebra endomorphism of $K$ which extends
$\eta_{\mu_E}$. We denote this endomorphism by $\eta_{\mu_E}$ again.
By \eqref{E3.1.1}, $\eta_{\mu_E}=\eta_{\sf D}\circ S^2$ on all of $K$.

By definition, $\eta_{\mu_E}$ is bijective when restricted to
the space spanned by $\{y_{ij}\}_{i,j}$. Therefore, $\eta_{\mu_E}$ is
surjective on $K$. Since $\eta_{\sf D}$ is an automorphism, $S^2$ is
a surjection. Therefore, $S$ is a surjection.
\end{proof}

\begin{theorem}
\label{thmxx3.3} Suppose that
\begin{enumerate}
\item[(i)]
$E$ is connected graded and Frobenius;
\item[(ii)]
the map $\mu_E\mid E_1$ is scalar multiplication
\item[(iii)]
$E$ is a left $K$-comodule algebra such that the $K$-coaction on $E_1$ is
inner-faithful; and
\item[(iv)]
 the $K$-coaction on $E$ has trivial homological
codeterminant. \end{enumerate} Then $S^2=Id$. If in addition,
$\ch k$ does not divide $\dim H$, then $H=K^{\circ}$ is
semisimple.
\end{theorem}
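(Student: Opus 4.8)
The plan is to obtain both conclusions as short specializations of Theorem~\ref{thmxx3.2}, whose hypotheses (i)--(iii) coincide with hypotheses (i)--(iii) here once ${\sf D}$ is taken to be the homological codeterminant of the coaction. The two extra assumptions (ii) and (iv) are precisely what collapse the two endomorphisms in Theorem~\ref{thmxx3.2} to the identity, and the semisimplicity statement will then follow from the Larson--Radford theorem.

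First I would record what hypotheses (ii) and (iv) mean for the identity $\eta_{\mu_E}=\eta_{\sf D}\circ S^2$ established in Theorem~\ref{thmxx3.2}. By \eqref{E3.0.4}, saying that $\mu_E|_{E_1}$ is scalar multiplication by some $\lambda\in k^\times$ is the same as saying that the associated matrix is $\alpha=\lambda\mathbb{I}$; since scalar matrices are central, \eqref{E3.1.1} gives $\eta_{\mu_E}(\mathbb{Y})=\alpha\mathbb{Y}\alpha^{-1}=\mathbb{Y}$, so $\eta_{\mu_E}$ fixes each generator $y_{ij}$ and hence $\eta_{\mu_E}=\mathrm{Id}_K$ (the $y_{ij}$ generate $K$ by inner-faithfulness, exactly as in Theorem~\ref{thmxx3.2}). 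Hypothesis (iv) says ${\sf D}=1_K$, so $\eta_{\sf D}=\mathrm{Id}_K$. Substituting both into $\eta_{\mu_E}=\eta_{\sf D}\circ S^2$ forces $S^2=\mathrm{Id}_K$, the first assertion; note that $S$ is then automatically bijective.

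For the semisimplicity statement I would pass to the dual. Here $K$ must be finite-dimensional (so that $H=K^\circ=K^*$ and $\dim_k H$ is a genuine integer), and then $H$ carries the dual Hopf structure with antipode $S_H=S_K^{\,*}$, so that $S_H^2=(S_K^2)^*=\mathrm{Id}_H$. Consequently $\operatorname{tr}(S_H^2)=\dim_k H$, which is nonzero in $k$ precisely when $\ch k\nmid\dim_k H$. The Larson--Radford theorem states that a finite-dimensional Hopf algebra with $\operatorname{tr}(S^2)\neq 0$ is semisimple (in fact semisimple and cosemisimple); applying it to $H$ gives the claim.

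I do not expect a genuine obstacle here, since all the homological work is already packaged in Theorem~\ref{thmxx3.2}. The only points requiring care are the standing finite-dimensionality of $K$ and the clean transport of $S^2=\mathrm{Id}$ across the duality $K\leftrightarrow H$, together with quoting the trace form of the Larson--Radford criterion rather than its characteristic-zero incarnation (so that the positive-characteristic case $\ch k\nmid\dim_k H$ is covered).
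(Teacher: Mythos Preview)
Your proposal is correct and follows essentially the same route as the paper: use hypothesis (ii) to see that $\alpha$ is scalar so $\eta_{\mu_E}$ fixes the generators $y_{ij}$, use inner-faithfulness to extend $\eta_{\mu_E}=\mathrm{Id}_K$, and then plug ${\sf D}=1$ into the identity of Theorem~\ref{thmxx3.2} to get $S^2=\mathrm{Id}$; the paper's proof is the same computation, only phrased as $S^2(y)={\sf D}\,y\,{\sf D}^{-1}$ before setting ${\sf D}=1$. Your explicit invocation of Larson--Radford via $\operatorname{tr}(S_H^2)=\dim_k H$ and your remark about the implicit finite-dimensionality of $K$ simply spell out what the paper leaves tacit in its one-line conclusion.
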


\begin{proof} By hypothesis (ii) and \eqref{E3.1.1},
$\eta_{\mu_E}$ is the identity on
$y_{ij}\in K$ for all $i,j$ for $\{y_{ij}\}$ in \eqref{E3.0.5}. By
hypothesis (iii) and Lemma \ref{lemxx1.3}(b), $\eta_{\mu_E}$ is the
identity on $K$. By Theorem~\ref{thmxx3.2}, we have that
$S^2(y)={\sf D} y {\sf D}^{-1}$ for all $y\in K$. By hypothesis
(iv), ${\sf D}=1$, so $S^2=Id$. Since $\ch k$ does not divide $\dim H$, we have that $H$ is semisimple.
\end{proof}

\begin{remark}
\label{remxx3.4}
The automorphism $\eta_{\mu_E}$ is defined by using a basis of
$\left\{ a_i \right\}_{i = 1}^n$ of $E_1$. Theorem~\ref{thmxx3.2}
shows that the construction $\eta_{\mu_E}$ is independent of the choice of
such a basis.
\end{remark}

\section{Proof of the main results}
\label{secxx4}

In this section, we prove Theorems \ref{thmxx0.1}, \ref{thmxx0.4},
and \ref{thmxx0.6}. First we introduce and provide preliminary
results on {\it $N$-Koszul algebras}. The concept of an $N$-Koszul
algebra was introduced by Berger in \cite{Berger:Koszulity}. If $N=2$,
then an $N$-Koszul algebra is just a usual Koszul algebra. The
following is the definition for general $N$. Let $V$ be a
finite-dimensional $k$-vector space and let $N$ be an integer larger
than 1. Let $R$ be a subspace of $V^{\otimes n}$ and let $A=k\langle
V\rangle /(R)$.  The algebra $A$ is called {\it $N$-Koszul} if the
left trivial $A$-module $k$  has a free resolution of the form
$$\cdots \to A(-s(i))^{d_i}\to \cdots \to A(-s(2))^{d_2}\to
A(-s(1))^{d_1}\to A\to k\to 0$$
where $s(2j) = Nj$ and $s(2j + 1) = Nj + 1$; see \cite[Definition 2.10]{Berger:Koszulity}
and the discussion in \cite[Section 2]{Berger:Koszulity}. We need the following
property about $N$-Koszul algebras to prove some of our main results.

\begin{lemma}
\label{lemxx4.1}\cite[Theorem 6.3]{BergerMarconnet} Let $A$ be an $N$-Koszul AS
regular algebra of global dimension $d$. Let $\mu_A$ be the Nakayama
automorphism of $A$ (as defined in the introduction) and let $\mu_E$ be the Nakayama automorphism of
its $\Ext$-algebra $E$ (as defined in \eqref{E1.5.1}). Then
$\mu_{E}\mid_{E_1}=(-1)^{d+1} (\mu_A\mid_{A_1})^*$. \qed
\end{lemma}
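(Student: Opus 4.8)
The plan is to work with the minimal bimodule resolution of $A$ given by Berger's generalized Koszul complex \cite{Berger:Koszulity} and to read off both Nakayama automorphisms from its self-duality. Write $A = T(V)/(R)$ with $V = A_1$ and $R \subseteq V^{\otimes N}$, and let $W_i \subseteq V^{\otimes s(i)}$ be the associated Koszul subspaces, so that $W_0 = k$, $W_1 = V$, and $\Ext^i_A(k,k) = E_i \cong W_i^*$; in particular $E_1 \cong (A_1)^*$. Berger's construction yields a minimal resolution of $A$ by $A$-bimodules
$$\cdots \to A \otimes W_i \otimes A \to \cdots \to A \otimes W_1 \otimes A \to A \otimes A \to A \to 0,$$
and because $A$ is AS regular of global dimension $d$ this complex has length $d$ with $\dim_k W_d = 1$.

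First I would recover $\mu_A$ from this resolution. Each term $A \otimes W_i \otimes A$ is $A^e$-free of rank $\dim_k W_i$, so applying $\Hom_{A^e}(-, A^e)$ replaces it by $A \otimes W_i^* \otimes A$; by AS regularity the resulting cochain complex is again a resolution, now of the twisted bimodule ${}^{\mu_A}A^1$ up to the AS-index shift. Minimality forces this dual complex to coincide with the original one read in reverse, producing $k$-linear reflection isomorphisms $W_i^* \cong W_{d-i}$ that intertwine the differentials. The automorphism $\mu_A$ enters because $A^e = A \otimes A^{op}$ interchanges the two tensor factors under dualization: matching the left and right $A$-actions across the reflection forces a twist which, read on the outer generators, records exactly $\mu_A|_{A_1}$.

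Next I would identify the Frobenius pairing of $E$ with this same reflection. Since $E_i \cong W_i^*$, the nondegenerate Yoneda pairing $E_i \times E_{d-i} \to E_d = k$ of the Frobenius algebra $E$ is precisely a perfect pairing $W_i^* \times W_{d-i}^* \to k$, i.e. the reflection isomorphism $W_i^* \cong W_{d-i}$ above; thus the Frobenius structure of $E$ and the AS-regularity self-duality of $A$ are one and the same datum. Extracting $\mu_E|_{E_1}$ from the defining relation $\langle a, b\rangle = \langle b, \mu_E(a)\rangle$ then reduces to the very comparison of reflections that produced $\mu_A|_{A_1}$; because it is now read on $E_1 = (A_1)^*$ rather than on $A_1$, the result is the dual map, giving the structural identity $\mu_E|_{E_1} = \pm (\mu_A|_{A_1})^*$.

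The main obstacle is the sign $(-1)^{d+1}$, which is invisible to the structural argument and must be produced by bookkeeping of the Koszul sign conventions. Dualizing a length-$d$ complex reverses it, and to promote the reflection to an honest chain map each of the commuting squares one verifies contributes a sign; these accumulate so that the degree-$1$ component (paired against degree $d-1$) differs by the factor $(-1)^{d+1}$. I would fix one consistent sign convention on the bimodule Koszul differentials at the outset and track it through the duality. As a sanity check, for $A = k[x_1, \dots, x_n]$ one has $d = n$ and $\mu_A = \id$, while the Koszul dual $E = \Lambda(\xi_1, \dots, \xi_n)$ satisfies $\mu_E|_{E_1} = (-1)^{n+1}\id$; this confirms that the accumulated sign is indeed $(-1)^{d+1}$.
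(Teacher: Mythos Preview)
The paper does not prove this lemma; it is stated with a citation to Berger--Marconnet and closed with a \qed\ immediately after the statement. There is therefore no in-paper argument to compare your proposal against.

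That said, your outline is in the spirit of the Berger--Marconnet argument: one works with the bimodule $N$-Koszul resolution, uses its self-duality under $\Hom_{A^e}(-,A^e)$ coming from AS regularity to extract $\mu_A$, and identifies the Frobenius pairing on $E$ with the same reflection data to extract $\mu_E$. The polynomial/exterior sanity check is a reasonable anchor for the sign.

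Two places in your sketch would need to be tightened before it counts as a proof rather than a plan. First, the assertion that the Yoneda product $E_i \times E_{d-i} \to E_d$ coincides with the pairing $W_i^* \times W_{d-i}^* \to W_d^*$ coming from the reflection isomorphisms is not automatic: one has to compute Yoneda composition via the Koszul resolution and check that, under the identifications $E_i \cong W_i^*$, it agrees with the restriction of tensor multiplication on $(V^*)^{\otimes \bullet}$. Second, the sign derivation (``each of the commuting squares contributes a sign; these accumulate to $(-1)^{d+1}$'') is an assertion, not a calculation; producing the exact exponent $d+1$ rather than some other parity requires fixing the differential conventions on the bimodule complex and carrying them through the dualization explicitly, which is precisely the delicate bookkeeping that the cited reference carries out.
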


Let $A$ be an $N$-Koszul AS regular algebra with $\{x_1,\cdots,x_n\}$ a basis of $A_1$. 
Retain the notation in the lemma above, and let $\mathbb{M}$ be the matrix $(m_{ij})_{n\times n}$
such that
$$\mu_A(x_i)=\sum_{j=1}^n m_{ij} x_j$$
for all $i$. If $\{x_i^*\}_{i=1}^n$ is the basis of $E_1:=
\Ext^1_A(k,k)$, which is dual to the basis $\{x_i\}_{i=1}^n$, then
by Lemma \ref{lemxx4.1},
$$\mu_{E}(x_i^*)=\sum_{j=1}^n (-1)^{d+1} m_{ji} x_j^*$$
for all $i$. Note that if we use the notation introduced in \eqref{E3.0.4}, then
we have  that $\alpha=(-1)^{d+1}\mathbb{M}^\tau$ is the matrix associated to the Nakayama automorphism of $E$.

Now consider the automorphism $\eta_{\mu_A^{\tau}}$ on $K$ defined by
conjugating by the transpose of the corresponding matrix $\mathbb{M}$ of $\mu_A$:
\begin{equation*} \tag{E4.1.1} \label{E4.1.1}
\eta_{\mu_A^\tau}(y_{ij})
=\sum_{s,t=1}^n m_{si}y_{st}n_{jt},
\end{equation*}
for all $1\leq i,j\leq n$.
Here, $(n_{ij})_{n\times n}=\mathbb{M}^{-1}$.

We are now ready to prove Theorem \ref{thmxx0.1}.

\begin{proof}[Proof of Theorem \ref{thmxx0.1}]
By \cite[Section 7.5]{Manin:QGNCG} (also by Remark
\ref{remxx1.7}(d)), $K$ coacts on $E$ from the left such that $E$ is
a left $K$-comodule algebra. 
If the right $K$-coaction on $A_1$ is given by 
$\rho(x_i)=\sum_{j=1}^n x_j\otimes y_{ji}$, 
then the left $K$-coaction on $E_1=\Ext^1_A(k,k)$ is given by 
$\rho(x^*_i)=\sum_{j=1}^n y_{ij}\otimes x^*_i$. Since the
$K$-coaction on $A$ is inner-faithful and $A$ is generated by $A_1$,
by Lemma \ref{lemxx1.3}(c), the $K$-coaction on $A_1$ is
inner-faithful. Hence the $K$-coaction on $E_1$ is inner-faithful.
Therefore by Theorem~\ref{thmxx3.2}, $\eta_{\mu_E} = \eta_{\sf D}\circ S^2$. Here,
${\sf D}$ is the homological codeterminant of the left $K$-coaction on $E$.
Also by Definition \ref{defxx1.6}(b), ${\sf D}$ is the homological codeterminant of the right $K$-coaction
on $A$. 
By Lemma \ref{lemxx4.1}, we get that $\mu_{E}\mid_{E_1}=(-1)^{d+1}
(\mu_A\mid_{A_1})^*$. Now $\eta_{\mu_E}=\eta_{\mu_A^\tau}$ holds by
(\ref{E3.1.1}), (E4.1.1), and the equation $\alpha = (-1)^{d+1} \mathbb{M}^{\tau}$ above. The assertion follows.
\end{proof}

\begin{remark}
\label{remxx4.2} We conjecture that a version of Lemma
\ref{lemxx4.1} holds for general AS regular algebras generated in
degree 1. If this were true, then Theorem \ref{thmxx0.1} holds for
any AS regular algebras that are generated in degree 1, particularly
for those that are not necessarily $N$-Koszul.
\end{remark}

Next we aim to prove Theorem \ref{thmxx0.4}. Let $\{p_{ij}\mid i<j\}$ be a
set of parameters in $k^{\times}$. For this set, let $k_{p_{ij}}[x_1,\cdots,x_n]$
be the {\it skew polynomial ring} generated by $x_1,\cdots,x_n$ and
subject to the relations $$x_jx_i=p_{ij}x_ix_j$$ for all $i<j$. Here, we take
$p_{ii}=1$ and $p_{ji}=p_{ij}^{-1}$ for all $i<j$.

\begin{theorem}
\label{thmxx4.3} Suppose $k$ is algebraically closed. Let $A$ be the
skew polynomial ring $k_{p_{ij}}[x_1,\cdots,x_n]$. Suppose a
finite dimensional Hopf algebra $H$ acts on $A$ inner-faithfully.
Assume that, for
each pair $(i,j)$ of distinct indices, $\prod_{a=1}^n
(p_{ia}p_{aj})$ is not a $2l$-th root of unity, where $l:=\dim H$. Then $H$ is a group
algebra.
\end{theorem}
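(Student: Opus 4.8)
The plan is to reduce Theorem~\ref{thmxx4.3} to the semisimplicity-type machinery of Section~\ref{secxx3} applied to the dual Hopf algebra $K := H^\circ = H^*$, which coacts on $A$ inner-faithfully (Lemma~\ref{lemxx1.3}(a)) and on the $\Ext$-algebra $E = A^!$ (the skew polynomial ring is Koszul, indeed $N$-Koszul with $N=2$). The key observation is that a skew polynomial ring has a \emph{diagonal} Nakayama automorphism: one computes directly that $\mu_A(x_i) = \bigl(\prod_{a=1}^n p_{ia}p_{ai}^{-1}\bigr) x_i$ or a similar monomial scalar, so the associated matrix $\mathbb{M} = (m_{ij})$ in \eqref{E4.1.1} is diagonal, $m_{ij} = \lambda_i \delta_{ij}$ with $\lambda_i \in k^\times$. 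First I would pin down these eigenvalues $\lambda_i$ exactly in terms of the $p_{ij}$, since the hypothesis that $\prod_{a=1}^n(p_{ia}p_{aj})$ is not a $2l$-th root of unity is clearly engineered to control ratios $\lambda_i/\lambda_j$.

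With $\mathbb{M}$ diagonal, the automorphism $\eta_{\mu_A^\tau}$ of \eqref{E4.1.1} becomes simply $\eta_{\mu_A^\tau}(y_{ij}) = (\lambda_i/\lambda_j)\, y_{ij}$. By Theorem~\ref{thmxx0.1} this equals $\eta_{\D}\circ S^2$ applied to the generators $y_{ij}$ of $K$; that is, $\D^{-1} S^2(y_{ij}) \D = (\lambda_i/\lambda_j) y_{ij}$. The next step is to exploit finite-dimensionality: since $H$ has dimension $l$, so does $K$, and by the Larson--Radford circle of results the order of $S^2$ (equivalently $S^4$) on a finite-dimensional Hopf algebra is finite and controlled by $l$ — in particular $S^{4l} = \mathrm{id}$ or some such bound, making $(\eta_{\D}\circ S^2)$ of finite order dividing a quantity tied to $2l$. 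I would combine this with the grouplike $\D$ (so $\eta_{\D}$ has order equal to the order of $\D$, which divides $l$) to conclude that the scalar $\lambda_i/\lambda_j$ must be a root of unity of order dividing $2l$. The hypothesis then forces $\lambda_i/\lambda_j = 1$, or more precisely forces the relevant products $\prod_a p_{ia}p_{aj}$ to be trivial, which by the non-root-of-unity assumption collapses the eigenvalue structure.

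Once all the $\lambda_i$ coincide (so $\mu_A|_{A_1}$ is a single scalar), $\mu_E|_{E_1}$ is scalar multiplication by Lemma~\ref{lemxx4.1}, and Theorem~\ref{thmxx3.2}/Theorem~\ref{thmxx3.3} give strong rigidity: the conjugation automorphism is trivial on generators, forcing $S^2 = \mathrm{id}$ on $K$ (after absorbing the grouplike $\D$). A cocommutative or involutive finite-dimensional Hopf algebra with $S^2 = \mathrm{id}$ over an algebraically closed field, together with the semisimplicity one extracts, should pin $H$ down to a group algebra — here I would invoke that $S^2 = \mathrm{id}$ combined with inner-faithfulness and the quantum-group structure of Section~\ref{secxx2} forces $K$ to be commutative, whence $H = K^*$ is cocommutative, and over an algebraically closed field a cocommutative finite-dimensional Hopf algebra is a group algebra (by Cartier--Gabriel--Kostant, in characteristic zero, or the relevant modular analogue).

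\textbf{Main obstacle.} The delicate point will be the eigenvalue bookkeeping and the passage from ``$S^2$ has finite order bounded by a function of $l$'' to ``$\lambda_i/\lambda_j$ is a $2l$-th root of unity.'' One must track carefully how the grouplike $\D$ (whose order divides $\dim K = l$) interacts with the order of $S^2$, and ensure the combined bound lands exactly at $2l$ rather than some larger multiple; getting the constant right is precisely what justifies the ``$2l$-th root of unity'' in the hypothesis. The reduction from ``$\mu_A$ is a single scalar'' to ``$H$ is a group algebra'' is comparatively soft, relying on the established semisimplicity and commutativity machinery, so I expect the arithmetic of roots of unity and the order of $S^2$ to be where the real work lies.
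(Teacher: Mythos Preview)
Your setup is right — $K=H^{\circ}$ coacts on $A$ and on $E=A^!$, the Nakayama matrix $\mathbb{M}$ is diagonal with $\lambda_i=\prod_a p_{ia}$, and Theorem~\ref{thmxx0.1} gives $\eta_{\sf D}\circ S^2(y_{ij})=q_{ij}\,y_{ij}$ with $q_{ij}=\lambda_i/\lambda_j=\prod_a p_{ia}p_{aj}$. You also correctly identify that $o(\eta_{\sf D})\mid l$ (Nichols--Zoeller) and $o(S^2)\mid 2l$ (Radford), so $m:=o(\eta_{\sf D}\circ S^2)$ divides $2l$.

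The logic breaks at the next step. From $(\eta_{\sf D}\circ S^2)^m=\mathrm{id}$ you obtain $q_{ij}^{\,m}\,y_{ij}=y_{ij}$. This does \emph{not} force $q_{ij}^{\,m}=1$; it forces the disjunction $q_{ij}^{\,m}=1$ \emph{or} $y_{ij}=0$. The hypothesis says $q_{ij}$ is not a $2l$-th root of unity, so (since $m\mid 2l$) $q_{ij}^{\,m}\neq 1$ for $i\neq j$, and the correct conclusion is $y_{ij}=0$ for all $i\neq j$. Your statement ``the hypothesis then forces $\lambda_i/\lambda_j=1$'' is the opposite of what the hypothesis says and cannot be right: the $\lambda_i$ are fixed by the parameters $p_{ij}$ and there is no reason whatsoever for them to coincide.

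Consequently the entire second half of your plan (make $\mu_A$ scalar, invoke Theorem~\ref{thmxx3.3}, deduce $S^2=\mathrm{id}$, appeal to Cartier--Gabriel--Kostant) is both unnecessary and unavailable. Once the off-diagonal $y_{ij}$ vanish, the coproduct $\Delta(y_{ii})=\sum_s y_{is}\otimes y_{si}$ collapses to $y_{ii}\otimes y_{ii}$, so each $y_{ii}$ is grouplike; the relation $x_j^*x_i^*=-p_{ji}x_i^*x_j^*$ in $A^!$ then forces $y_{ii}y_{jj}=y_{jj}y_{ii}$. Hence $K$ is generated as an algebra by commuting grouplikes, so $K$ is commutative, and $H=K^{\circ}$ is a group algebra. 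No semisimplicity, no scalar Nakayama, and no characteristic-zero assumption are needed.
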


\begin{proof} It is known that $A$ is a Koszul algebra and
$A^!$ is Frobenius. We have a basis $\{ x_1^*, \dots, x_n^*\}$ of
$A_1^!$, subject to the relations
$$x_j^* x_i^* + p_{ij}^{-1} x_i^* x_j^* \text{~for all~} i <j
\quad \quad  \text{and} \quad \quad (x_i^*)^2 = 0 \text{~for all~} i.$$
Let $\{ \hat{x}_1^*, \dots, \hat{x}_n^* \}$ be a basis of $A_{n-1}^!$
where $\hat{x}_i^*:=x_1^* \cdots x_{i-1}^* x_{i+1}^* \cdots x_n^*$.
Moreover, take $\mathfrak{e}$ to be  $x_1^{\ast} \cdots
x_n^{\ast} \in A^!_n$. Now
$$x_i^* \hat{x}_j^* = \delta_{ij} (-p_{1i}^{-1})
\cdots (-p_{i-1,i}^{-1}) \mathfrak{e} = \delta_{ij} (-p_{1j}^{-1})
\cdots (-p_{j-1,j}^{-1}) \mathfrak{e}.$$
Let $b_j$ denote $(-p_{1j}) \cdots (-p_{j-1,j}) \hat{x}_j^*$
so that $$x^*_i b_j = \delta_{ij} \mathfrak{e}.$$
Compare this to Equation \eqref{E3.0.1}.

On the other hand,
\[
\begin{array}{ll}
b_i x_j^* &= (-p_{1i}) \cdots (-p_{i-1,i}) \hat{x}_i^* x_j^*\\
                &= \delta_{ij} \displaystyle  \prod_{a=1}^{i-1}
(-p_{ai}) \cdot p_{ii} \cdot \displaystyle \prod_{a=i+1}^n
(-p_{ai}) \cdot \mathfrak{e}.
\end{array}
\]
Let $c_j$ denote $\prod_{a=1}^n (-1)^{n-1} p_{ja}x_j^*$ so that
$$b_i c_j = \delta_{ij} \mathfrak{e}.$$
Compare this to Equation \eqref{E3.0.2}.

By Equations (\ref{E3.0.3}, \ref{E3.0.4}), we have that the Nakayama
automorphism $\mu_{A^!}$ is defined by
$$\mu_{A^!}(x_i^*) = (-1)^{n+1} \prod_{a=1}^n p_{ia} x_i^*.$$

Hence the matrix associated to $\mu_{A^!}$ is the diagonal matrix
$\alpha$ with
$(i,i)$-entry being $(-1)^{n+1} \prod_{a=1}^n p_{ia}$.
Now let $\{y_{ij}\}$ be elements of $K$ satisfying Equation
\eqref{E3.0.5} and Remark \ref{remxx2.4}, that is to say
$$\rho^!(x_i^*) = \sum_{s=1}^n y_{is} \otimes x_s^*.$$ Put
$\mathbb{Y}:=(y_{ij})$. Then the $(i,j)$ entry of
$\alpha \mathbb{Y} \alpha^{-1}$ is $q_{ij} y_{ij}$, where
$q_{ij} = \prod_{a=1}^n p_{ia}p_{aj}$.

Now for $K=H^\circ$ coacting on $A$,  we aim to show that $K$ is
commutative. Recall that $l=\dim K=\dim
H<\infty$. Let $\eta_{\sf D}$ be the conjugation automorphism by the
homological codeterminant ${\sf D}$ of the $K$-coaction. Then Theorem
\ref{thmxx0.1} implies that $\eta_{\sf D} \circ S^2$ sends $y_{ij}$ to
$q_{ij} y_{ij}$. Since $S^2({\sf D})={\sf D}$, $\eta_{\sf D}$ commutes
with $S^2$.
Since ${\sf D}\in K$, the order $o({\sf D})$ of ${\sf D}$ divides $l$
by the Nichols-Zoeller theorem
\cite[Theorem~3.1.5]{Montgomery}. By Radford's theorem, (see
\cite[page~209]{RadfordSchneider}), we have that $o(S^2)\mid 2l$ .
So $m:=o(\eta_{\sf D}\circ
S^2)$ divides $2l$. For any $i\neq j$,
$$y_{ij }=(\eta_{\sf D} \circ S^2)^m(y_{ij})=q_{ij}^{m} y_{ij}.$$
Since we assume that $o(q_{ij})$ does not divide $2l$, we have that
$q_{ij}^{m}\neq 1$. Thus $y_{ij}=0$ for all $i\neq j$. Thus $K$ is
generated by grouplike elements $y_{ii}$. 
 Since $y_{ij}=0$
for all $i\neq j$, $\rho^!(x_i^*)=y_{ii}\otimes  x_i^*$ for all $i$.
Then the relation $x_j^*x_i^*=-p_{ji}x_i^*x_j^*$ implies that
$y_{ii}$ and $y_{jj}$ commute. Therefore $K$ is commutative,
and $H=K^\circ$ is a group algebra as desired \cite[Theorem 2.3.1]{Montgomery}.
\end{proof}

\begin{proof}[Proof of Theorem \ref{thmxx0.4}]
 Here we take $p_{ij}=p$
for all $i<j$. It is direct to check that $q_{ij}:= \prod_{a=1}^n (p_{ia} p_{aj}) =p^{2(j-i)}$
is not a root of unity for all $i\neq j$, since $p$ is not a root of unity. Hence Theorem
\ref{thmxx0.4} follows from Theorem \ref{thmxx4.3}.
\end{proof}

Finally, we prove Theorem \ref{thmxx0.6} below.

\begin{proof}[Proof of Theorem \ref{thmxx0.6}]
Retain the notation as above and assume that the hypotheses of
Theorem \ref{thmxx0.6} hold. When $A$ is $r$-Nakayama of global
dimension $d$, the matrix $\mathbb{M}$  corresponding to the Nakayama automorphism
$\mu_A$ is $(-1)^{d+1}r \mathbb{I}$ by Lemma~\ref{lemxx4.1}. In this
case, Theorem \ref{thmxx0.1} states that
$${\sf D}^{-1}S^2({\mathbb Y}){\sf D}= (\mathbb{M}^{\tau})^{-1} \mathbb{Y} \mathbb{M}^{\tau} = {\mathbb Y}.$$
Since we also assume that the $K$-coaction has trivial homological
codeterminant, we have ${\sf D}=1$. Thus
$S^2({\mathbb Y})={\mathbb Y}$
or $S^2$ is the identity on the set $\{y_{ij}\}$. By Lemma~\ref{lemxx1.3}(b), $K$ is generated by $\{y_{ij}\}$ as a Hopf
algebra. Thus $S^2$ is the identity on $K$.
 Since $\ch k$ does not divide $\dim K$, we have that $K$ and $K^{\circ}$ are
semisimple.
\end{proof}

\begin{remark}
A natural question is when the skew polynomial $A:=k_{p_{ij}}[x_1,\cdots,x_n]$ is
$r$-Nakayama. By the proof of Theorem \ref{thmxx0.4} and Lemma
\ref{lemxx4.1}, $\mu_A(x_i)=(\prod_{a=1}^n p_{ia})x_i$ for each
$i=1,\cdots,n$. Hence, $A$ is $r$-Nakayama if and only if
$\prod_{a=1}^n p_{ia}=r$ for all $i$. Note that $\prod_{i=1}^n
(\prod_{a=1}^n p_{ia})=1$. This implies that $r^n=1$. So, $A$ is
$r$-Nakayama if and only if both $r^n=1$ and $p_{1i}=r^{-1}\prod_{a=2}^n
p_{ia}$ for every $i$. Here, $\{p_{ij}\}_{2 \leq i<j \leq n}$ are independent
variables. A special case occurs when $p_{ij}=-1$ for all $1\leq i<j\leq n$; 
in this case, $A$ is $(-1)^{n-1}$-Nakayama.
\end{remark}

\section{Examples}
\label{secxx5} In this section, we provide some explicit
 examples of the main results and of  some of the quantum
 groups discussed in Section~\ref{secxx2}. In particular, we take
$A$ to be an AS regular algebra of global dimension 2.
We show that if such an $A$ is non-PI, then there are
no non-trivial Hopf algebra actions on $A$ (c.f. Theorem \ref{thmxx5.10}). 
 We also prove
Proposition \ref{proxx0.7} here. 

\subsection{For the skew polynomial ring $A=A_p:=k_p[x_1,x_2]$}
Recall that $$A_p := k_p[x_1,x_2]=k\langle x_1,x_2\rangle /(x_2x_1-px_1x_2).$$
Suppose a Hopf algebra $K$ coacts on $A_p$ with
$$\rho(x_i)= x_1\otimes y_{1i} +  x_2\otimes y_{2i}.$$
for some $y_{si}\in K$ for $i=1,2$.

\begin{lemma}
\label{lemxx5.1} If $K$ coacts on $A_p$, then we get the following relations for $\{y_{ij}\}$:
$$\begin{aligned}
y_{12}y_{11}- p y_{11}y_{12} &=0, \\
y_{22}y_{21}- p y_{21}y_{22}&=0, \\
y_{22}y_{11}-p y_{21}y_{12}&= {\sf D}, \\
y_{11}y_{22}-p^{-1} y_{12}y_{21} &={\sf D},\\
\end{aligned}$$
where ${\sf D}$ is the homological codeterminant of the $K$-coaction
on $A_p$.
\end{lemma}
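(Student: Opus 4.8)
The plan is to obtain the four relations from two independent sources: the comodule-algebra axiom applied to the defining relation of $A_p$, which produces the first two relations together with a single ``mixed'' identity, and the definition of the homological codeterminant via the Koszul dual, which pins down ${\sf D}$ and converts the mixed identity into the remaining two relations.

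First I would exploit that $A_p$ is a right $K$-comodule algebra, so $\rho$ is an algebra homomorphism and must annihilate the defining relation $r=x_2x_1-px_1x_2$. Computing $\rho(x_2x_1)=\rho(x_2)\rho(x_1)$ and $\rho(x_1x_2)=\rho(x_1)\rho(x_2)$ with the multiplication $(u\otimes a)(v\otimes b)=uv\otimes ab$, and then reducing $x_2x_1$ to $px_1x_2$ so that $\{x_1^2,\,x_1x_2,\,x_2^2\}$ is a $k$-basis of $(A_p)_2$, the equation $\rho(r)=0$ splits into three identities in $K$, one per basis monomial. The coefficients of $x_1^2$ and $x_2^2$ give the first two listed relations outright, while the coefficient of $x_1x_2$ gives, after dividing by $p$, the identity $y_{22}y_{11}-py_{21}y_{12}=y_{11}y_{22}-p^{-1}y_{12}y_{21}$.

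Next I would identify the common value of the two sides of this identity with ${\sf D}$. Since $A_p$ is Koszul, its $\Ext$-algebra $E$ is the Koszul dual $A_p^!=k\langle x_1^*,x_2^*\rangle$ modulo $(x_1^*)^2$, $(x_2^*)^2$, and $x_2^*x_1^*+p^{-1}x_1^*x_2^*$, so that $E_2$ is one-dimensional with basis $\mathfrak{e}=x_1^*x_2^*$. By Remark~\ref{remxx2.4} (and Definition~\ref{defxx1.6}(b)), ${\sf D}$ is determined by $\rho^!(\mathfrak{e})={\sf D}\otimes\mathfrak{e}$, where the induced left coaction on $E_1$ is $\rho^!(x_i^*)=\sum_j y_{ij}\otimes x_j^*$. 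Expanding $\rho^!(\mathfrak{e})=\rho^!(x_1^*)\rho^!(x_2^*)$ and reducing by the relations of $A_p^!$ (so that only the $x_1^*x_2^*$ term survives) yields ${\sf D}=y_{11}y_{22}-p^{-1}y_{12}y_{21}$, which is the fourth relation. Substituting this back into the mixed identity from the previous step immediately produces the third relation.

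The individual computations are routine; the real obstacle is bookkeeping with conventions. I must ensure that the right coaction on $A_1$ and the induced left coaction $\rho^!$ on $E_1=(A_1)^*$ carry matching index orders, and that the defining relation of $A_p^!$ is recorded with the correct power of $p$, since a sign or exponent slip here would replace ${\sf D}$ by ${\sf D}^{-1}$ or misplace the factor $p$ between the third and fourth relations. I would fix the convention by specializing the general dual relation used in the proof of Theorem~\ref{thmxx4.3}, namely $x_j^*x_i^*+p_{ij}^{-1}x_i^*x_j^*$ for $k_{p_{ij}}[x_1,\dots,x_n]$, to the case $n=2$.
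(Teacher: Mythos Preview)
Your proposal is correct and follows essentially the same approach as the paper: apply the comodule-algebra condition to the relation $x_2x_1-px_1x_2$ to extract the quadratic identities among the $y_{ij}$, and then compute ${\sf D}$ via $\rho^!(\mathfrak{e})$ in the Koszul dual exactly as the paper does. The only cosmetic difference is that the paper works in the free algebra $k\langle x_1,x_2\rangle$ and writes $\rho(r)=r\otimes g$ for a grouplike $g$ (so the $x_1x_2$ and $x_2x_1$ coefficients give the third and fourth relations separately with $g$ in place of ${\sf D}$), whereas you reduce to the basis of $(A_p)_2$ and obtain the single mixed identity first; after identifying $g={\sf D}$ the two computations coincide.
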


\begin{proof} Since $K$ coacts on $A_p$, the coaction $\rho$ maps the relation
of $A_p$ to zero. This means that $\rho(x_2x_1-px_1x_2)\in k\langle x_1, x_2 \rangle \otimes K$
generates a 1-dimensional right $K$-comodule. Hence, there is a grouplike
element $g\in K$ such that
\[
\begin{array}{ll}
\rho(x_2x_1-px_1x_2)&=(x_2x_1-px_1x_2)\otimes g\\
&=x_1x_2\otimes (-p g)+x_2x_1\otimes g.
\end{array}
\]
By a direct computation, we have that
$$\begin{aligned}
\rho(x_2x_1-px_1x_2)&=x_1^2\otimes y_{12}y_{11}+x_1x_2\otimes
y_{12}y_{21} +x_2x_1\otimes y_{22}y_{11}+x_2^2\otimes y_{22}y_{21}\\
&\quad-p(x_1^2\otimes y_{11}y_{12}+x_1x_2\otimes y_{11}y_{22}
+x_2x_1\otimes y_{21}y_{12}+x_2^2\otimes y_{21}y_{22})\\
&= x_1^2\otimes (y_{12}y_{11}-py_{11}y_{12})+x_1x_2\otimes
(y_{12}y_{21}-py_{11}y_{22})\\
&\quad +x_2x_1\otimes (y_{22}y_{11}-py_{21}y_{12})+x_2^2\otimes
(y_{22}y_{21}-py_{21}y_{22}).
\end{aligned}
$$
By comparing the coefficients of $x_ix_j$'s, we have that
$$\begin{aligned}
y_{12}y_{11}-p y_{11}y_{12} &=0,\\
y_{22}y_{21}- p y_{21}y_{22}&=0, \\
y_{22}y_{11}-p y_{21}y_{12}&= g,\\
y_{11}y_{22}-p^{-1} y_{12}y_{21} &=g.
\end{aligned}
$$

Using the notation at the beginning at Section~\ref{secxx3}, we have that $$A^!
= E = \frac{k \langle x_1^*, x_2^* \rangle}{(x_2^* x_1^* + p^{-1}
x_1^* x_2^*, ~(x_1^*)^2, ~(x_2^*)^2)}.$$ Pick $\mathfrak{e} = x_1^*
x_2^* \in E$. Finally, applying $\rho^!$ (discussed in Remark
\ref{remxx2.4}) to $\mathfrak{e}$, we have that the homological
codeterminant ${\sf D}$ is given by
$$\rho^!(\mathfrak{e}) = {\sf D} \otimes \mathfrak{e} =
(y_{11}\otimes x_1^* +y_{12}\otimes x_2^*)(y_{21}\otimes x_1^* +y_{22}\otimes x_2^*).$$
Thus ${\sf D}=g =y_{11}y_{22}-p^{-1} y_{12}y_{21}$. The assertion follows.
\end{proof}

The following is an application of Lemma \ref{lemxx3.1}(c).

\begin{lemma}
\label{lemxx5.2} Retain the above notation.
$$\begin{aligned}
S(y_{11})&=y_{22} {\sf D}^{-1},\\
S(y_{12})&=-p y_{12} {\sf D}^{-1},\\
S(y_{21})&=-p^{-1} y_{21} {\sf D}^{-1},\\
S(y_{22})&=y_{11} {\sf D}^{-1}.
\end{aligned}$$
\end{lemma}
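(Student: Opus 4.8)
The plan is to read the four formulas straight off Lemma~\ref{lemxx3.1}(c), which in the present notation asserts $S(\mathbb{Y})=\mathbb{F}^\tau({\sf D}^{-1}\mathbb{I})$, i.e. $S(y_{ij})=f_{ji}{\sf D}^{-1}$ for all $i,j$. Thus the entire statement reduces to identifying the four elements $f_{ij}$ that govern the left coaction on the ``dual basis'' $\{b_1,b_2\}$ of $E_{l-1}$, where $E=A_p^!$ is the Frobenius $\Ext$-algebra appearing in Lemma~\ref{lemxx5.1}.

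First I would record the concrete Frobenius data. Here $E=A_p^!=k\langle x_1^*,x_2^*\rangle/(x_2^*x_1^*+p^{-1}x_1^*x_2^*,\,(x_1^*)^2,\,(x_2^*)^2)$, so $l=2$, one takes $\mathfrak{e}=x_1^*x_2^*$, and $a_i=x_i^*$ is the chosen basis of $E_1$. The next step is to solve $a_ib_j=\delta_{ij}\mathfrak{e}$ as in \eqref{E3.0.1} for $b_1,b_2\in E_1=E_{l-1}$. Using $(x_i^*)^2=0$ together with the relation $x_2^*x_1^*=-p^{-1}x_1^*x_2^*$, one finds $b_1=x_2^*$ and $b_2=-p\,x_1^*$.

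With $b_1,b_2$ in hand I would apply the left coaction $\rho^!$ of Remark~\ref{remxx2.4}, for which $\rho^!(x_1^*)=y_{11}\otimes x_1^*+y_{12}\otimes x_2^*$ and $\rho^!(x_2^*)=y_{21}\otimes x_1^*+y_{22}\otimes x_2^*$. Expanding $\rho^!(b_i)$ and rewriting the output in the basis $\{b_1,b_2\}$ via $x_2^*=b_1$ and $x_1^*=-p^{-1}b_2$, and then matching coefficients against $\rho^!(b_i)=\sum_s f_{is}\otimes b_s$, yields $\mathbb{F}=\begin{pmatrix} y_{22} & -p^{-1}y_{21}\\ -p\,y_{12} & y_{11}\end{pmatrix}$, hence $\mathbb{F}^\tau=\begin{pmatrix} y_{22} & -p\,y_{12}\\ -p^{-1}y_{21} & y_{11}\end{pmatrix}$. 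Substituting into $S(\mathbb{Y})=\mathbb{F}^\tau{\sf D}^{-1}$ gives the four asserted identities entry by entry.

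The computation is essentially bookkeeping; the only delicate point is the determination of the dual basis $\{b_1,b_2\}$. Because the quadratic relation $x_2^*x_1^*=-p^{-1}x_1^*x_2^*$ is asymmetric, the two normalizing scalars come out to be $1$ and $-p$ rather than a symmetric pair, and it is precisely these scalars that propagate into the $\pm p^{\pm 1}$ factors in the final formulas. Once $b_1,b_2$ are pinned down correctly, the rest follows mechanically from Lemma~\ref{lemxx3.1}(c).
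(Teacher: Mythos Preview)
Your proof is correct and follows essentially the same route as the paper: both arguments compute the dual basis $b_1=x_2^*$, $b_2=-p\,x_1^*$ in $E_1$, expand $\rho^!(b_i)$ in the $\{b_1,b_2\}$ basis to read off the entries $f_{ij}$, and then invoke Lemma~\ref{lemxx3.1}(c) in the form $S(\mathbb{Y})=\mathbb{F}^\tau{\sf D}^{-1}$. The only cosmetic difference is that you package the $f_{ij}$ into the matrix $\mathbb{F}$ explicitly, whereas the paper leaves them implicit in the displayed expansions of $\rho^!(b_1)$ and $\rho^!(b_2)$.
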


\begin{proof}
Let $a_1 := x_1^*$, $a_2:=x_2^*$ be a basis of $E_1$. Now we need
a basis $\{b_1, b_2\}$ of $E_1$ so that $a_i b_j = \delta_{ij}
\mathfrak{e}$. Here, $b_1 = x_2^*=a_2$ and $b_2 = -px_1^*=-pa_1$.
Thus
\[
\begin{array}{rll} \notag
\rho^!(b_1) &=  y_{21} \otimes x_1^* + y_{22} \otimes x_2^*
&= -p^{-1} y_{21} \otimes b_2 +y_{22} \otimes b_1, \text{~~and}\\
\notag
\rho^!(b_2) &= -py_{11} \otimes x_1^*  - py_{12} \otimes x_2^*
&= y_{11} \otimes b_2 - py_{12} \otimes b_1 .
\end{array}
\]
Therefore, the result follows from Lemma \ref{lemxx3.1}(c).
\end{proof}

Towards the  proof of Proposition \ref{proxx0.7}, let us assume that
$K$ is semisimple. We now get the following relations among $\{y_{ij}\}$.

\begin{lemma}
\label{lemxx5.3}
Suppose that $S^2=Id$. Then $K$ has relations
$$\begin{aligned}
y_{21}y_{11}&=p^{-1}y_{11}y_{21},\\
y_{22}y_{12}&=p^{-1}y_{12}y_{22},\\
py_{21}y_{12}&=p^{-1}y_{12}y_{21},\\
y_{22}y_{11}&=y_{11}y_{22}.
\end{aligned}
$$
\end{lemma}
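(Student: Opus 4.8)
The plan is to combine the two sets of identities already established: the quadratic relations among the $y_{ij}$ coming from compatibility of the coaction with the defining relation of $A_p$ (Lemma~\ref{lemxx5.1}), and the explicit antipode formulas of Lemma~\ref{lemxx5.2}. Since $S$ is an algebra anti-homomorphism and we are assuming $S^2=\id$, applying $S$ to each relation of Lemma~\ref{lemxx5.1} and then rewriting $S(y_{ij})$ via Lemma~\ref{lemxx5.2} should produce new relations; the reversal of products under $S$ will turn the left-normalized quadratics into the right-normalized commutation relations we want.

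Before doing this I would first record how the grouplike element ${\sf D}$ commutes past each generator. Applying $S$ to the four formulas of Lemma~\ref{lemxx5.2}, using that $S$ reverses products and that $S({\sf D})={\sf D}^{-1}$ since ${\sf D}$ is grouplike, gives $S^2(y_{11})={\sf D}\,y_{11}{\sf D}^{-1}$, $S^2(y_{22})={\sf D}\,y_{22}{\sf D}^{-1}$, $S^2(y_{12})=p^2{\sf D}\,y_{12}{\sf D}^{-1}$, and $S^2(y_{21})=p^{-2}{\sf D}\,y_{21}{\sf D}^{-1}$. Imposing $S^2=\id$ then yields ${\sf D}y_{11}=y_{11}{\sf D}$, ${\sf D}y_{22}=y_{22}{\sf D}$, ${\sf D}y_{12}=p^{-2}y_{12}{\sf D}$, and ${\sf D}y_{21}=p^{2}y_{21}{\sf D}$. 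These four ``straightening'' rules are exactly what I will need to push every stray ${\sf D}^{-1}$ to the right during the main computation.

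With these in hand I would apply $S$ to the relations of Lemma~\ref{lemxx5.1} one at a time. Applying $S$ to $y_{12}y_{11}=py_{11}y_{12}$ gives $S(y_{11})S(y_{12})=p\,S(y_{12})S(y_{11})$; substituting Lemma~\ref{lemxx5.2}, straightening the ${\sf D}^{-1}$'s, and cancelling the common invertible factor ${\sf D}^{-2}$ collapses this to $y_{22}y_{12}=p^{-1}y_{12}y_{22}$. The same procedure applied to $y_{22}y_{21}=py_{21}y_{22}$ yields $y_{21}y_{11}=p^{-1}y_{11}y_{21}$. For the two remaining relations I would apply $S$ to the ``determinant'' relation $y_{22}y_{11}-py_{21}y_{12}={\sf D}$; after substitution and straightening this becomes $y_{22}y_{11}-p^{-1}y_{12}y_{21}={\sf D}$. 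Comparing this new identity against the original third and fourth relations of Lemma~\ref{lemxx5.1} then forces $p\,y_{21}y_{12}=p^{-1}y_{12}y_{21}$ and $y_{22}y_{11}=y_{11}y_{22}$, completing the list.

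The only real obstacle is the bookkeeping: because ${\sf D}$ fails to be central (it commutes with $y_{11}$ and $y_{22}$ but scales $y_{12},y_{21}$ by $p^{\mp2}$), one must track the powers of $p$ carefully when transporting the ${\sf D}^{-1}$ factors, and the reversal of order under the anti-homomorphism $S$ means that a left-multiplication relation is expected to yield a right-multiplication one. Once the straightening rules of the second step are fixed, each of the four computations is a short, mechanical simplification.
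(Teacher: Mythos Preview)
Your proof is correct and follows essentially the same route as the paper: both compute $S^2(y_{ij})$ from Lemma~\ref{lemxx5.2}, impose $S^2=\id$ to obtain the commutation rules between ${\sf D}$ and the $y_{ij}$, and then use these to straighten a set of quadratic relations containing an interior ${\sf D}^{-1}$. The only cosmetic difference is the source of those intermediate relations: the paper reads them off from the antipode axiom $S(\mathbb{Y})\mathbb{Y}=\mathbb{I}$ (Lemma~\ref{lemxx3.1}(a)) after substituting Lemma~\ref{lemxx5.2}, whereas you obtain them by applying $S$ directly to the relations of Lemma~\ref{lemxx5.1}; after one cancellation these two routes produce literally the same identities $y_{22}{\sf D}^{-1}y_{12}=p\,y_{12}{\sf D}^{-1}y_{22}$, etc.
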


\begin{proof} We have from Lemma \ref{lemxx3.1}(a) that
\[
\begin{array}{rl}
y_{22}{\sf D}^{-1}y_{12}& =p y_{12}{\sf D}^{-1}y_{22}, \\
y_{21}{\sf D}^{-1}y_{11}&= p y_{11}{\sf D}^{-1}y_{21}, \\
y_{22}{\sf D}^{-1}y_{11}-p y_{12}{\sf D}^{-1}y_{21}&= 1, \\
y_{11}{\sf D}^{-1}y_{22}-p^{-1} y_{21}{\sf D}^{-1}y_{12} &=1.
\end{array}
\]
Applying $S^2$ to $y_{ij}$, we obtain that
$$\begin{aligned}
S^2(y_{11})&={\sf D}y_{11} {\sf D}^{-1},\\
S^2(y_{12})&= p^2 {\sf D} y_{12}{\sf D}^{-1},\\
S^2(y_{21})&= p^{-2} {\sf D}y_{21}{\sf D}^{-1},\\
S^2(y_{22})&= {\sf D}y_{22}{\sf D}^{-1}.
\end{aligned}$$
Since $S^2 = Id$, we have that
$$\begin{aligned}
{\sf D}y_{11} &= y_{11} {\sf D},\\
{\sf D} y_{12}&= p^{-2}y_{12} {\sf D},\\
{\sf D}y_{21} &= p^2 y_{21} {\sf D},\\
{\sf D}y_{22} &=y_{22} {\sf D}.
\end{aligned}$$
Hence, the first and last set of equations, along with Lemma \ref{lemxx5.1}, yield the result.
\end{proof}

Now we compute the quantum groups $\mathcal{O}_{A_p}(SL)$ and
$\mathcal{O}_{A_p}(GL/S^2)$ associated to $A_p$ (Definition
\ref{defxx2.9}). To do this, we consider a two-parameter family of
quantum $GL_n$, denoted by $GL_{\alpha,\beta}(n)$; such quantum
groups were defined by Takeuchi \cite[Section
2]{Takeuchi:two-parameter}. It follows from the definition that the
standard quantum group ${\mathcal O}_q(GL_n(k))$ is equal to Takeuchi's
$GL_{q,q}(n)$.

\begin{proposition}
\label{proxx5.4} Suppose that a finite dimensional Hopf algebra $K$
(with antipode $S$) coacts on the skew polynomial ring $A_p$ with
$S^2=Id_K$. Then $K$ is a Hopf quotient of Takeuchi's quantum group
$GL_{p,p^{-1}}(2)$. As a consequence, the quantum group ${\mathcal
O}_{A_p}(GL/S^2)=GL_{p,p^{-1}}(2)$.
\end{proposition}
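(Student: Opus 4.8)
The plan is to identify the generators $y_{ij}$ arising from the coaction with the standard generators of Takeuchi's $GL_{p,p^{-1}}(2)$ \cite[Section 2]{Takeuchi:two-parameter} and to check that the two presentations agree. The conceptual point behind the choice of parameters is that the row parameter $p$ and the column parameter $p^{-1}$ satisfy $p\cdot p^{-1}=1$, and this is exactly the numerical condition under which Takeuchi's antipode satisfies $S^2=Id$; thus $GL_{p,p^{-1}}(2)$ is the universal two-parameter quantum $GL_2$ on which $S^2=Id$, and it is this universality that we exploit.

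For the first assertion, I would first collect all relations holding among the $y_{ij}$ in $K$. Lemma \ref{lemxx5.1} provides $y_{12}y_{11}=py_{11}y_{12}$ and $y_{22}y_{21}=py_{21}y_{22}$ together with the two expressions for ${\sf D}$; Lemma \ref{lemxx5.2} records $S$ on the generators; and, invoking $S^2=Id_K$, Lemma \ref{lemxx5.3} supplies $y_{21}y_{11}=p^{-1}y_{11}y_{21}$, $y_{22}y_{12}=p^{-1}y_{12}y_{22}$, $py_{21}y_{12}=p^{-1}y_{12}y_{21}$ and $y_{22}y_{11}=y_{11}y_{22}$, with ${\sf D}$ grouplike and invertible. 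One then verifies that, setting $a=y_{11}$, $b=y_{12}$, $c=y_{21}$, $d=y_{22}$, these are precisely the defining relations of $GL_{p,p^{-1}}(2)$, that the matrix comultiplication and counit agree, and that the antipode of Lemma \ref{lemxx5.2} matches Takeuchi's once ${\sf D}$ is identified with the inverse quantum determinant. Hence the assignment on generators extends to a Hopf algebra map $\phi\colon GL_{p,p^{-1}}(2)\to K$. Assuming (as in the running hypotheses) the coaction is inner-faithful, Lemma \ref{lemxx1.3}(b) together with $S^2=Id_K$ and Lemma \ref{lemxx5.2} shows $K$ is generated as an algebra by $\{y_{ij},{\sf D}^{-1}\}$, all of which lie in the image of $\phi$; so $\phi$ is onto and $K$ is a Hopf quotient of $GL_{p,p^{-1}}(2)$.

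For the consequence I would produce two mutually inverse surjections. On one side, $GL_{p,p^{-1}}(2)$ coacts on $A_p$ via $\rho(x_i)=\sum_s x_s\otimes y_{si}$ (its generators satisfy the Lemma \ref{lemxx5.1} relations, so Proposition \ref{proxx2.5} applies), this coaction is inner-faithful and has $S^2=Id$; hence Lemma \ref{lemxx2.10}(b) shows the canonical map $\mathcal{O}_{A_p}(GL)\to GL_{p,p^{-1}}(2)$ factors as a surjection $\theta\colon\mathcal{O}_{A_p}(GL/S^2)\to GL_{p,p^{-1}}(2)$. On the other side, $\mathcal{O}_{A_p}(GL/S^2)$ coacts on $A_p$ inner-faithfully with $S^2=Id$ by construction; since Lemmas \ref{lemxx5.1}--\ref{lemxx5.3} require no finiteness hypothesis, they apply to this (infinite-dimensional) Hopf algebra and yield, exactly as above, a surjection $\phi\colon GL_{p,p^{-1}}(2)\to\mathcal{O}_{A_p}(GL/S^2)$. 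Both $\theta$ and $\phi$ fix the $y_{ij}$, hence are mutually inverse, giving $\mathcal{O}_{A_p}(GL/S^2)\cong GL_{p,p^{-1}}(2)$.

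I expect the main obstacle to lie in the relation-matching and in pinning down the presentation of $\mathcal{O}_{A_p}(GL)/I$. Concretely, one must confirm in Takeuchi's conventions that the anti-diagonal relation $py_{21}y_{12}=p^{-1}y_{12}y_{21}$ and the diagonal relation $y_{22}y_{11}=y_{11}y_{22}$ are the correct specializations at $(\alpha,\beta)=(p,p^{-1})$, and that the antipode and inverted determinant agree on the nose. For the passage to $GL/S^2$, the key point is that $\mathcal{O}_{A_p}(GL)$ is generated as an algebra by $\{y_{ij},{\sf D}^{-1}\}$ (using Lemma \ref{lemxx5.2}) and that $S^2$ is an algebra endomorphism fixing ${\sf D}^{-1}$; therefore the Hopf ideal $I=(S^2(a)-a\mid a\in\mathcal{O}_{A_p}(GL))$ is generated by the differences $S^2(y_{ij})-y_{ij}$, which Lemma \ref{lemxx5.3} converts into precisely the four displayed relations and nothing more.
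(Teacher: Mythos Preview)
Your proposal is correct and follows essentially the same approach as the paper: the paper's proof simply states that the relations from Lemmas \ref{lemxx5.1} and \ref{lemxx5.3} together give all the defining relations of Takeuchi's $GL_{p,p^{-1}}(2)$, and concludes. Your argument is considerably more detailed---in particular, you spell out the surjectivity via inner-faithfulness and give an explicit two-sided argument for the identification $\mathcal{O}_{A_p}(GL/S^2)\cong GL_{p,p^{-1}}(2)$, both of which the paper leaves implicit---but the underlying strategy is the same.
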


\begin{proof}
The relations in Lemma \ref{lemxx5.1} and \ref{lemxx5.3} combined give
all relations of quantum group $GL_{p,p^{-1}}(2)$ as defined in
\cite[Section 2]{Takeuchi:two-parameter}. Hence, the assertion follows.
\end{proof}

\begin{proposition} \label{proxx5.5} Suppose that the $K$-coaction
on $A_p$ has trivial homological codeterminant, then $K$ is a Hopf
quotient of $\mathcal{O}_p(SL_2(k))$. Thus,  ${\mathcal O}_{A_p}(SL)=\mathcal{O}_p(SL_2(k))$.
\end{proposition}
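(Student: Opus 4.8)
The plan is to combine Lemma~\ref{lemxx2.10}(a) with an explicit identification of the quantum special linear group ${\mathcal O}_{A_p}(SL)$. First I would invoke the universal property of ${\mathcal O}_{A_p}(GL)$ (Definition-Lemma~\ref{defxx2.8}): the given $K$-coaction on $A_p$ produces a canonical Hopf algebra map ${\mathcal O}_{A_p}(GL)\to K$ onto the Hopf subalgebra generated by the coaction. Since the homological codeterminant is trivial, Lemma~\ref{lemxx2.10}(a) shows this map factors through ${\mathcal O}_{A_p}(SL)$. Thus it suffices to prove ${\mathcal O}_{A_p}(SL)\cong {\mathcal O}_p(SL_2(k))$, after which $K$ is realized as a Hopf quotient of ${\mathcal O}_p(SL_2(k))$.

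To identify ${\mathcal O}_{A_p}(SL)={\mathcal O}_{A_p}(GL)/({\sf D}-1)$, I would read off its presentation by setting ${\sf D}=1$ in Lemmas~\ref{lemxx5.1} and~\ref{lemxx5.2}. Lemma~\ref{lemxx5.1} supplies the relations $y_{12}y_{11}=py_{11}y_{12}$, $y_{22}y_{21}=py_{21}y_{22}$, together with the two quantum determinant relations $y_{22}y_{11}-py_{21}y_{12}=1$ and $y_{11}y_{22}-p^{-1}y_{12}y_{21}=1$. With ${\sf D}=1$ the antipode formulas of Lemma~\ref{lemxx5.2} become $S(y_{11})=y_{22}$, $S(y_{12})=-py_{12}$, $S(y_{21})=-p^{-1}y_{21}$, $S(y_{22})=y_{11}$, which in particular expresses every $S(y_{ij})$ as a scalar multiple of a generator, so ${\mathcal O}_{A_p}(SL)$ is generated as an \emph{algebra} by the $y_{ij}$. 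Applying the anti-homomorphism $S$ to the first two relations yields the complementary relations $y_{22}y_{12}=py_{12}y_{22}$ and $y_{21}y_{11}=py_{11}y_{21}$, while applying $S$ to $y_{22}y_{11}-py_{21}y_{12}=1$ and comparing with the original determinant relation forces $y_{12}y_{21}=y_{21}y_{12}$ (and then $y_{22}y_{11}-y_{11}y_{22}=(p-p^{-1})y_{21}y_{12}$ follows from the two determinant forms). These are exactly the defining relations of ${\mathcal O}_p(SL_2(k))$ under the identification of $(y_{ij})$ with the standard generating matrix, so sending the standard generators of ${\mathcal O}_p(SL_2(k))$ to the $y_{ij}$ defines a surjective Hopf algebra map $\Psi:{\mathcal O}_p(SL_2(k))\to {\mathcal O}_{A_p}(SL)$.

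To produce the inverse, I would use that $A_p$ is a comodule algebra over ${\mathcal O}_p(SL_2(k))$ via the standard matrix coaction, whose homological codeterminant is the quantum determinant and hence equals $1$. The universal property of ${\mathcal O}_{A_p}(GL)$ and Lemma~\ref{lemxx2.10}(a) then give a Hopf algebra map $\Phi:{\mathcal O}_{A_p}(SL)\to {\mathcal O}_p(SL_2(k))$ sending each $y_{ij}$ to the corresponding standard generator. Since $\Phi\circ\Psi$ is the identity on generators, $\Psi$ is a split surjection, hence an isomorphism, giving ${\mathcal O}_{A_p}(SL)\cong{\mathcal O}_p(SL_2(k))$ and the proposition. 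The step I expect to be the main obstacle is justifying that the presentation read off above is \emph{complete}, i.e. that passing to the Hopf envelope ${\mathcal O}_{A_p}(GL)$ and then to the $SL$-quotient introduces no relations beyond those of ${\mathcal O}_p(SL_2(k))$; the two-sided map argument through $\Phi$ and $\Psi$ is designed precisely to bypass a direct verification of completeness.
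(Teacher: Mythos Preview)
Your proposal is correct and follows essentially the same route as the paper: set ${\sf D}=1$, read off the relations from Lemma~\ref{lemxx5.1}, and apply the antipode formulas of Lemma~\ref{lemxx5.2} to those relations to recover the full presentation of ${\mathcal O}_p(SL_2(k))$. The one difference is that the paper simply asserts that the resulting relations form a complete set for ${\mathcal O}_p(SL_2(k))$ and stops, whereas you supply the additional two-sided map argument (constructing $\Phi$ via the universal property and checking $\Phi\circ\Psi=\mathrm{id}$ on generators) to certify that no extra relations appear in the Hopf envelope; this makes your version slightly more rigorous on exactly the point you flagged as the main obstacle.
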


\begin{proof} 
Since the $K$-coaction has trivial cohomological
determinant, ${\sf D}=1_K$. Now the relations in Lemma
\ref{lemxx5.1}, the relations obtained from applying the antipode (computed in Lemma~\ref{lemxx5.2}) to these relations, together with ${\sf D}=1_K$
provide a complete set of relations for $\mathcal{O}_p(SL_2(k))$.
The assertion follows.
\end{proof}

Now we are ready to prove Proposition \ref{proxx0.7}.

\begin{proof}[Proof of Proposition \ref{proxx0.7}]
Let $K=H^{\circ}$. Then $K$ coacts on $k[x_1,x_2]$ inner-faithfully.

Since $H$ is semisimple, so is $K$. Consequently, $S_K^{2}=Id_K$. By
Proposition \ref{proxx5.4}, $K$ is a Hopf quotient of
$GL_{p,p^{-1}}(2)$, where $p=1$ in this case. Since $GL_{1,1}(2)={\mathcal
O}(GL_2)$ is commutative, so is $K$. Therefore $H$ is cocommutative.
Since $H$ is finite dimensional over an algebraically closed field,
$H$ is a group algebra as desired.
\end{proof}

\subsection{For the Jordan plane $A=A_J:=k_J[x_1,x_2]$}
In this subsection, we provide computations similar to those in the
previous subsection for the algebra
 $$A_J := k_J[x_1,x_2]=k\langle x_1,x_2\rangle /(x_2x_1-x_1x_2-x_1^2).$$
Suppose that $A_J$ is a $K$-comodule algebra with comodule
structure map\\ $\rho: A_J\to A_J\otimes K$ defined by
$$\rho(x_i)= x_1\otimes a_{1i} +  x_2\otimes a_{2i}$$
for some $a_{si} \in K$ with $i=1,2$.

\begin{lemma}
\label{lemxx5.6} Retain the notation above. Let ${\sf D}$ be the
homological codeterminant of the $K$-coaction on $A_J$. Then we have
that
$$\begin{aligned}
a_{12}a_{11}-a_{11}a_{12}-a_{11}^2&= -{\sf D},  \\
a_{12}a_{21}-a_{11}a_{22}-a_{11}a_{21}&=-{\sf D},\\
a_{22}a_{11}-a_{21}a_{12}-a_{21}a_{11}&={\sf D}, \\
a_{22}a_{21}-a_{21}a_{22}-a_{21}^2&=0.
\end{aligned}$$
\end{lemma}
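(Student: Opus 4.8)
The plan is to mirror the proof of Lemma~\ref{lemxx5.1}. Since $A_J$ is a $K$-comodule algebra, the coaction, lifted to the free algebra $k\langle x_1,x_2\rangle$, must send the one-dimensional relation space $R = k\,r$, with $r = x_2x_1 - x_1x_2 - x_1^2$, into $R\otimes K$ (Lemma~\ref{lemxx2.2}). Because $R$ is spanned by the single element $r$, this forces $\rho(r) = r\otimes g$ for some $g\in K$; the usual coassociativity and counit computation then shows $g$ is grouplike. All four relations will be read off by computing $\rho(r)$ in two ways and comparing coefficients.

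First I would expand $\rho(r) = \rho(x_2)\rho(x_1) - \rho(x_1)\rho(x_2) - \rho(x_1)\rho(x_1)$ using $\rho(x_i) = x_1\otimes a_{1i} + x_2\otimes a_{2i}$, and collect the coefficients of the four monomials $x_1^2, x_1x_2, x_2x_1, x_2^2$, which form a basis of the degree-two part of the free algebra. On the other side, $r\otimes g$ contributes $-g$ to $x_1^2$, $-g$ to $x_1x_2$, $+g$ to $x_2x_1$, and $0$ to $x_2^2$. Matching the coefficient of each monomial produces exactly the four displayed identities, but with $g$ in place of ${\sf D}$; in particular the $x_2^2$-coefficient already yields the last relation verbatim.

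It remains to identify $g$ with the homological codeterminant ${\sf D}$, and this is the step demanding the most care, since the Jordan relation is inhomogeneous (the extra $x_1^2$ term links the monomials in a way that does not occur in the pure skew-polynomial setting of Lemma~\ref{lemxx5.1}). Here I would use Remark~\ref{remxx2.4}: as $A_J$ is Koszul, $E = A_J^!$, and computing $R^\perp$ gives $A_J^! = k\langle x_1^*, x_2^*\rangle/\bigl((x_1^*)^2 + x_2^*x_1^*,\; x_1^*x_2^* + x_2^*x_1^*,\; (x_2^*)^2\bigr)$, whose top-degree component is one-dimensional. Taking $\mathfrak{e} = x_2^*x_1^*$ as the maximal-degree basis element and using $\rho^!(x_i^*) = \sum_s a_{is}\otimes x_s^*$, I would compute $\rho^!(\mathfrak{e}) = \rho^!(x_2^*)\rho^!(x_1^*)$ and reduce the degree-two terms by the relations of $A_J^!$ (so that $x_1^*x_1^* = x_1^*x_2^* = -\mathfrak{e}$ and $(x_2^*)^2 = 0$). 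This yields $\rho^!(\mathfrak{e}) = (a_{22}a_{11} - a_{21}a_{12} - a_{21}a_{11})\otimes\mathfrak{e}$, so by Definition~\ref{defxx1.6}(b) one has ${\sf D} = a_{22}a_{11} - a_{21}a_{12} - a_{21}a_{11}$. Comparing this with the $x_2x_1$-relation identifies $g = {\sf D}$, and substituting back completes the proof.
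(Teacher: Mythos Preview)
Your proposal is correct and follows exactly the approach the paper intends: the paper's proof of Lemma~\ref{lemxx5.6} is the single sentence ``As before in Subsection~\ref{secxx5}.1, use the coefficients of $\rho(x_2x_1-x_1x_2-x_1^2)$,'' and you have carried out precisely that computation, including the identification $g={\sf D}$ via $\rho^!(\mathfrak{e})$ in the Koszul dual, just as in the proof of Lemma~\ref{lemxx5.1}. Your extra care with the explicit presentation of $A_J^!$ and the reduction of $\rho^!(\mathfrak{e})$ is more detailed than what the paper writes out, but it is the same argument.
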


\begin{proof} As before in Subsection~\ref{secxx5}.1, use the coefficients of $\rho(x_2 x_1 - x_1 x_2 -x_1^2)$.
\end{proof}

The next lemma is an application of Lemma \ref{lemxx3.1}(c).

\begin{lemma}
\label{lemxx5.7} Retain the notation above. Then
$$\begin{aligned}
S(a_{11})&= (a_{22}+a_{21}) {\sf D}^{-1},\\
S(a_{12})&= (-a_{11}-a_{12}+a_{21}+a_{22}) {\sf D}^{-1},\\
S(a_{21})&= -a_{21} {\sf D}^{-1},\\
S(a_{22})&= (a_{11}-a_{21}) {\sf D}^{-1}.
\end{aligned}$$
\vspace{-.35in}

\qed
\end{lemma}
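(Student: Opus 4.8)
The plan is to treat this exactly as the proof of Lemma~\ref{lemxx5.2}, realizing the claim as a direct application of Lemma~\ref{lemxx3.1}(c) to the Frobenius algebra $E = A_J^!$. Since $A_J$ is quadratic and AS regular of global dimension $2$, it is Koszul, so by Remark~\ref{remxx2.4} its $\Ext$-algebra $E$ is identified with the Koszul dual $A_J^!$, which is Frobenius with top degree $l = 2$. First I would determine the relations of $A_J^!$ by dualizing the single defining relation $x_2x_1 - x_1x_2 - x_1^2$ of $A_J$ against the degree-$2$ monomials in $x_1^*, x_2^*$; this yields $(x_2^*)^2 = 0$, $x_1^*x_2^* = (x_1^*)^2$, and $x_2^*x_1^* = -(x_1^*)^2$, so that $E_2$ is one-dimensional and I may take $\mathfrak{e} = (x_1^*)^2 = x_1^*x_2^*$.

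Fixing the basis $a_1 = x_1^*$, $a_2 = x_2^*$ of $E_1$, the next step is to solve for the complementary basis $\{b_1, b_2\}$ of $E_1$ singled out by $a_ib_j = \delta_{ij}\mathfrak{e}$ (cf. \eqref{E3.0.1}), using the multiplication rules just obtained. A short computation gives $b_1 = x_2^*$ and $b_2 = x_2^* - x_1^*$. By Remark~\ref{remxx2.4}, the induced left $K$-coaction on $E_1$ is $\rho^!(x_i^*) = a_{i1}\otimes x_1^* + a_{i2}\otimes x_2^*$, so that $\mathbb{Y} = (a_{ij})$ in the notation of Section~\ref{secxx3}.

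The core step is to apply $\rho^!$ to $b_1$ and $b_2$ and rewrite the outputs in the $\{b_1, b_2\}$ basis via $x_1^* = b_1 - b_2$ and $x_2^* = b_1$. This reads off the matrix $\mathbb{F} = (f_{ij})$ defined by $\rho^!(b_i) = \sum_s f_{is}\otimes b_s$; I expect
$$\mathbb{F} = \begin{pmatrix} a_{21} + a_{22} & -a_{21} \\ -a_{11} - a_{12} + a_{21} + a_{22} & a_{11} - a_{21} \end{pmatrix}.$$
Lemma~\ref{lemxx3.1}(c) then gives $S(\mathbb{Y}) = \mathbb{F}^\tau({\sf D}^{-1}\mathbb{I})$, that is $S(a_{ij}) = f_{ji}{\sf D}^{-1}$, which is exactly the list of four formulas in the statement.

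I expect the only genuine difficulty to be bookkeeping rather than conceptual. Because the Jordan-plane relations are less symmetric than those of the skew polynomial ring, the complementary vector $b_2$ picks up the extra summand $-x_1^*$, and this propagates through the change of basis for $\rho^!(b_2)$; care is needed there to recover the correct coefficients $-a_{11} - a_{12} + a_{21} + a_{22}$ and $a_{11} - a_{21}$. Once $\mathbb{F}$ is computed correctly, transposing and multiplying by ${\sf D}^{-1}$ is immediate.
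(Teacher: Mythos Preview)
Your proposal is correct and follows exactly the approach indicated by the paper, which simply states that the lemma is an application of Lemma~\ref{lemxx3.1}(c) and omits the details. Your computation of $A_J^!$, the complementary basis $b_1 = x_2^*$, $b_2 = x_2^* - x_1^*$, and the matrix $\mathbb{F}$ are all accurate, and transposing and right-multiplying by ${\sf D}^{-1}\mathbb{I}$ reproduces the four formulas in the statement.
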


Moreover, the following lemma follows from Lemma \ref{lemxx5.7}.

\begin{lemma}
\label{lemxx5.8} Retain the notation above. Then
$$\begin{aligned}
S^2(a_{11})&= {\sf D} (a_{11}-2a_{21}) {\sf D}^{-1},\\
S^2(a_{12})&= {\sf D}(a_{12}+2a_{11}-2a_{22}-4a_{21}) {\sf D}^{-1},\\
S^2(a_{21})&= {\sf D} a_{21} {\sf D}^{-1},\\
S^2(a_{22})&= {\sf D}(a_{22}+2a_{21}) {\sf D}^{-1}.
\end{aligned}$$
\vspace{-.36in}

\qed
\end{lemma}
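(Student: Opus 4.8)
The plan is to compute $S^2$ directly by applying the antipode $S$ once more to each of the four identities furnished by Lemma~\ref{lemxx5.7}. The only structural facts I will use are that $S$ is an anti-endomorphism of $K$, so $S(xy)=S(y)S(x)$, and that the homological codeterminant ${\sf D}$ is a grouplike element (as recorded in Definition~\ref{defxx1.6}(b)), whence $S({\sf D})={\sf D}^{-1}$ and therefore $S({\sf D}^{-1})={\sf D}$. With these two observations the computation is entirely mechanical.

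Concretely, I would take each formula $S(a_{ij}) = (\text{linear combination of the }a_{st})\,{\sf D}^{-1}$ from Lemma~\ref{lemxx5.7} and apply $S$ to it. Because $S$ reverses products, each such expression becomes
$$
S^2(a_{ij}) = S({\sf D}^{-1})\cdot S(\text{linear combination}) = {\sf D}\cdot\bigl(\text{same linear combination of the }S(a_{st})\bigr).
$$
At this stage I substitute the four values $S(a_{11}),S(a_{12}),S(a_{21}),S(a_{22})$ — again from Lemma~\ref{lemxx5.7} — into the inner linear combination. Each of these substitutions contributes a further factor of ${\sf D}^{-1}$ on the right, so after collecting coefficients of the $a_{st}$ every line acquires the shape ${\sf D}\,(\cdots)\,{\sf D}^{-1}$, exactly as claimed in the statement. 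For instance, the cases $S^2(a_{21})$ and $S^2(a_{11})$ reduce immediately to $-\,{\sf D}\,S(a_{21})$ and ${\sf D}\,(S(a_{22})+S(a_{21}))$ respectively, and unwinding the right-hand sides gives ${\sf D}\,a_{21}\,{\sf D}^{-1}$ and ${\sf D}\,(a_{11}-2a_{21})\,{\sf D}^{-1}$.

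There is no genuine obstacle here; the result follows formally once Lemma~\ref{lemxx5.7} is in hand, and the lemma is invoked only as a black box. The sole point requiring care is bookkeeping in the case of $a_{12}$, where the inner combination involves all four generators and each of the four substituted antipode-values itself spreads across several generators. Collecting the coefficients of $a_{11}, a_{12}, a_{21}, a_{22}$ there yields $2a_{11}+a_{12}-2a_{22}-4a_{21}$ inside the conjugation, and I would double-check the sign and the factor of $4$ on the $a_{21}$ term, since that is the only place where multiple contributions of the same generator accumulate. Accordingly the proof is essentially a single display per line, and I would present it by simply recording the four applications of $S$ and the resulting collected expressions.
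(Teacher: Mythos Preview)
Your proposal is correct and is exactly the approach the paper takes: the paper's proof consists solely of the sentence ``the following lemma follows from Lemma~\ref{lemxx5.7}'' and a \qed, so your direct application of $S$ to the four identities of Lemma~\ref{lemxx5.7}, using $S(xy)=S(y)S(x)$ and $S({\sf D}^{-1})={\sf D}$, is precisely the intended (and only) argument.
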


Now we will see that $K$ must be a group algebra as in the skew polynomial case.

\begin{proposition}
\label{proxx5.9} Suppose that $K$ coacts on $A_J$
inner-faithfully with $\dim K<\infty$, with $K$ not necessarily semisimple. If $\ch k =0$,
then $K=k G$ where $G = C_n$ for some $n$.
\end{proposition}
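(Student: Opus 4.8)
The plan is to exploit the structure equations for the Jordan plane coaction exactly as in the skew polynomial case, using inner-faithfulness together with the finite order of $S^2$ to force the off-diagonal behavior to trivialize, after which the remaining relations will force $K$ to be commutative and cocommutative. First I would observe that since $\ch k = 0$, the element $\mathsf{D}$ is grouplike and hence of finite order dividing $\dim K$, while $S^2$ has finite order dividing $2\dim K$ by Radford's theorem (as used in the proof of Theorem~\ref{thmxx4.3}). Looking at Lemma~\ref{lemxx5.8}, the key computational input is that $S^2(a_{21}) = \mathsf{D}\, a_{21}\, \mathsf{D}^{-1}$, so that $a_{21}$ is fixed by the automorphism $\eta_{\mathsf D}\circ S^2$; more importantly, the $S^2$-action is \emph{unipotent} (upper-triangular with $1$'s on the diagonal in the ordered basis), reflecting that $\mu_{A_J}$ is not diagonalizable.

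The crucial step will be to show that $a_{21}=0$. I would iterate the automorphism $\phi:=\eta_{\mathsf D}\circ S^2$, which has finite order $m$ dividing $2\dim K$. Because the matrix recording $\phi$ on the span of $\{a_{ij}\}$ is unipotent (not semisimple) with a genuine Jordan block of size $>1$ coming from the nilpotent part $2a_{21}$ appearing in $S^2(a_{11})$ and $S^2(a_{22})$, the only way a unipotent operator in characteristic zero can have finite order is if it is the identity. Hence $\phi^m = \id$ forces the nilpotent contributions to vanish, which directly yields $a_{21}=0$ (and then also collapses the $a_{11},a_{22},a_{12}$ formulas). This is the step where $\ch k = 0$ is genuinely essential: in positive characteristic a nontrivial unipotent can have finite order, so the argument would break.

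Once $a_{21}=0$, the coaction becomes upper-triangular, $\rho(x_1)=x_1\otimes a_{11}$, and the relations of Lemma~\ref{lemxx5.6} simplify dramatically. I would then feed $a_{21}=0$ back into Lemma~\ref{lemxx5.6} and Lemma~\ref{lemxx5.7}: the fourth relation becomes trivial, the remaining relations force $\mathsf D = a_{11}^2$ (or a comparable monomial identity), and one checks that $a_{11}$ is grouplike, hence of finite order. Combined with the simplified commutation relations, this should show $K$ is generated by a single grouplike element together with a possibly nilpotent or skew-primitive $a_{12}$; the inner-faithfulness hypothesis (Lemma~\ref{lemxx1.3}) together with finite-dimensionality then pins down $a_{12}$ as well. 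The expected conclusion is that $K = k[a_{11}, a_{11}^{-1}]/(a_{11}^n - 1)$, i.e.\ the group algebra of a finite cyclic group $C_n$, so that $K = kG$ with $G=C_n$.

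The main obstacle I anticipate is rigorously justifying the unipotence-plus-finite-order argument that kills $a_{21}$: one must argue at the level of the \emph{algebra} $K$ (via the generated Hopf algebra and inner-faithfulness, invoking Lemma~\ref{lemxx1.3}(b)) rather than merely at the level of the abstract $4\times 4$ matrix, since the $a_{ij}$ are genuine algebra elements and the conjugation by $\mathsf D$ interacts with their noncommutativity. Concretely, I would compute $\phi^k(a_{11})$ inductively and show it equals $\mathsf D^{k}(a_{11} - 2k\, a_{21})\mathsf D^{-k}$ up to conjugation, so that setting $\phi^m = \id$ yields $2m\,a_{21}=0$; since $\ch k = 0$ and $m\neq 0$, this gives $a_{21}=0$. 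Verifying that this linear-algebra identity survives passage through the noncommutative conjugations, and confirming the final identification of the group-algebra structure, are the places where the bookkeeping must be done with care.
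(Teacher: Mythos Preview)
Your approach is essentially the paper's: iterate $\phi=\eta_{\mathsf D}\circ S^2$, use that its order divides $2\dim K$, and exploit unipotence in characteristic $0$ to force $a_{21}=0$. Two small corrections are worth making. First, the conjugations cancel: from Lemma~\ref{lemxx5.8} one has $S^2(a_{11})=\mathsf D(a_{11}-2a_{21})\mathsf D^{-1}$ and $S^2(a_{21})=\mathsf D\,a_{21}\,\mathsf D^{-1}$, so $\phi(a_{11})=a_{11}-2a_{21}$ and $\phi(a_{21})=a_{21}$ on the nose, giving $\phi^m(a_{11})=a_{11}-2m\,a_{21}$ with no residual $\mathsf D^{k}(\cdot)\mathsf D^{-k}$; the bookkeeping you worried about disappears.

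Second, your endgame is too vague. After $a_{21}=0$, the paper runs the same iteration on $a_{12}$: since $\phi(a_{12})=a_{12}+2(a_{11}-a_{22})$ and $\phi(a_{11}-a_{22})=a_{11}-a_{22}$, one gets $a_{11}=a_{22}$. At that point $a_{11}$ is grouplike and $\Delta(a_{12})=a_{11}\otimes a_{12}+a_{12}\otimes a_{11}$, so $a_{11}^{-1}a_{12}$ is \emph{primitive}; finite-dimensionality in characteristic $0$ then forces $a_{12}=0$. Inner-faithfulness alone does not ``pin down $a_{12}$''---it only tells you the $a_{ij}$ generate $K$ as a Hopf algebra, which you need in order to conclude $K=k\langle a_{11}\rangle$ once $a_{12}=a_{21}=0$ and $a_{22}=a_{11}$. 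With these two fixes your argument is the paper's proof.
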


\begin{proof}
Using the notation introduced in Section~\ref{secxx1}, we have that $K$ is generated by
elements ${\sf D}, {\sf D}^{-1}$ and $\{a_{ij}\}_{i,j=1,2}$.
Let $\eta_{\sf D}$ be the conjugation automorphism by ${\sf D}$, namely,
$\eta_{\sf D}: f\mapsto  {\sf D}^{-1}f {\sf D}$ for all $f\in K$.
Lemma \ref{lemxx5.8}
implies that $\eta_{\sf D} \circ S^2$ sends $a_{11}$ to $a_{11}-2a_{21}$.

On the other hand, since $S^2({\sf D})={\sf D}$, $\eta_{\sf D}$ commutes with $S^2$. Since the subalgebra
generated by $D$ is a Hopf subalgebra of $K$, we have that the order $o({\sf D})$ of ${\sf D}$ divides $l:=\dim K$. By
Radford's theorem \cite[page 209]{RadfordSchneider}, $o(S^2)\mid 2l$. So
$m:=o(\eta_{\sf D}\circ S^2)$ divides $2l$. By a computation,
$$a_{11}=(\eta_D \circ S^2)^m(a_{11})=a_{11}-2m a_{21}.$$
Since we assume that $\ch k =0$, we have $2m\neq 0$ in
$k$. Thus $a_{21}=0$. An argument using $(\eta_D \circ S^2)^m(a_{12}) = a_{12}$ shows that
$a_{11}-a_{22}=0$.
Thus $K$ is generated by two  elements $a_{11}$
and $a_{12}$ with
$\Delta(a_{12})=a_{11}\otimes a_{12}+a_{12}\otimes a_{11}.$
Then $a_{11}^{-1}a_{12}$ is a primitive element.
 Since $K$ is finite
dimensional and $\ch k =0$, there is no non-trivial
primitive element. Therefore $a_{12}=0$ and $K$ is generated by
$a_{11}$. As a consequence, $K=k C_n$ for some $n$.
\end{proof}

\begin{theorem}
\label{thmxx5.10} Let $k$ be algebraically closed  and $A$ be a
noetherian non-PI AS regular algebra of dimension two that is
generated in degree 1. Suppose that a finite dimensional Hopf algebra $H$
acts on $A$ inner-faithfully. Then $H$ is a group algebra.
\end{theorem}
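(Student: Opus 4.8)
The plan is to reduce the statement to the two algebras already analysed in this section, namely the skew polynomial ring $A_p$ and the Jordan plane $A_J$, and then to invoke the results established for each. The first step is to classify $A$. A connected graded noetherian AS regular algebra of global dimension two that is generated in degree $1$ over the algebraically closed field $k$ has a single defining relation; were there more than two generators or a relation of degree larger than two, the algebra would have exponential growth and fail to be noetherian, so the relation is quadratic, $r = \sum_{i,j} c_{ij}x_ix_j$, in exactly two generators with nonsingular coefficient matrix $(c_{ij})$. Classifying $(c_{ij})$ up to the congruence induced by a change of variables on $A_1$ --- which over $k = \bar{k}$ amounts to the Jordan form of the asymmetry $(c_{ij})^{-1}(c_{ij})^\tau$ --- yields precisely two isomorphism classes: $A \cong A_p = k_p[x_1,x_2]$ for some $p\in k^\times$, or $A \cong A_J = k_J[x_1,x_2]$.

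Next I would pass from the action to a coaction: since $H$ is finite dimensional, the inner-faithful grading-preserving $H$-action on $A$ corresponds, by Lemma \ref{lemxx1.3}(a) together with Hypothesis \ref{hypxx0.3}(iv), to an inner-faithful right coaction of $K := H^\circ$ on $A$ with $\dim K = \dim H < \infty$. The remaining work is to treat the two cases, the point being to match the non-PI hypothesis to each.

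In the case $A \cong A_p$, the algebra is PI exactly when $p$ is a root of unity, so the non-PI assumption means $p$ is not a root of unity. Then Theorem \ref{thmxx0.4} (the case $n = 2$ of Theorem \ref{thmxx4.3}) applies and shows directly that $H$ is a group algebra. In the case $A \cong A_J$, the Jordan plane is PI whenever $\ch k = \ell > 0$: from the relation one computes $[x_2,x_1^m] = m\,x_1^{m+1}$, so $x_1^{\ell}$ is central and $A_J$ is module-finite over its center. Hence the non-PI assumption forces $\ch k = 0$, which is exactly the hypothesis of Proposition \ref{proxx5.9}. That proposition gives $K = kC_n$ for some $n$; since $kC_n$ is commutative, $H = K^\circ$ is cocommutative, and being finite dimensional over an algebraically closed field it is a group algebra by \cite[Theorem 2.3.1]{Montgomery}.

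The analytic substance of the argument is carried entirely by Theorem \ref{thmxx0.4} and Proposition \ref{proxx5.9}; the proof here is organizational. The step I expect to require the most care is the matching of hypotheses --- in particular the observation that the Jordan plane is PI in positive characteristic, so that the non-PI assumption silently supplies the characteristic-zero condition demanded by Proposition \ref{proxx5.9} --- together with the verification that, under the stated hypotheses, no AS regular algebra outside the two listed families can occur.
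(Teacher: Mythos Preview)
Your proposal is correct and follows essentially the same route as the paper: classify $A$ as $A_p$ or $A_J$, translate the non-PI hypothesis into ``$p$ is not a root of unity'' or ``$\ch k=0$'' respectively, and then invoke Theorem~\ref{thmxx0.4} and Proposition~\ref{proxx5.9}. You supply more detail than the paper does on the classification step and on why $K=kC_n$ forces $H$ to be a group algebra, but the structure and the key inputs are identical.
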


\begin{proof} Since $k$ is algebraically closed, every AS
regular algebra of global dimension two is isomorphic
to either $k_p[x_1,x_2]$ or $k_J[x_1,x_2]$.
\smallskip

\noindent \underline{Case 1}: $A=k_p[x_1,x_2]$. Since $A$ is not PI,
$p$ is not a root of
unity. The assertion follows from Theorem \ref{thmxx0.4}.
\smallskip

\noindent \underline{Case 2}: $A=k_J[x_1,x_2]$. Since $A$ is not PI,
$\ch k=0$. The assertion follows from Proposition~\ref{proxx5.9}.
\end{proof}

Now that we have studied Hopf actions on AS regular algebras of
global dimension 2, where the Gelfand-Kirillov dimension of $H$
is 0. We finish with a question.

\begin{question}
\label{quexx5.11} Let $p$ be not a root of unity and
$A=k_p[x_1,x_2]$. Is there a noncommutative Hopf algebra $K$ of
$\GKdim$ 1 coacting on $A$ inner-faithfully?
\end{question}

\subsection*{Acknowledgments}
We thank the anonymous referee for valuable comments that improved the exposition of this work. C. Walton and J.J. Zhang were supported by the US National
Science Foundation: NSF grants DMS-1102548 and DMS-0855743, respectively.

\bibliography{HopfAS2trivHdet_biblio}

\end{document}